\newcommand\supp{\mathop{\rm supp}}
\newcommand\esssup{\mathop{\rm ess \, sup}}
\newcommand\essinf{\mathop{\rm ess \, inf}}
\theoremstyle{plain} 
\newtheorem{theorem}{\indent\sc Theorem}[section]
\newtheorem{lemma}[theorem]{\indent\sc Lemma}
\newtheorem{proposition}[theorem]{\indent\sc Proposition}
\theoremstyle{definition} 
\newtheorem{definition}[theorem]{\indent\sc Definition}
\newtheorem{remark}[theorem]{\indent\sc Remark}
\def\address#1#2{\begingroup
\noindent\parbox[t]{7.8cm}{%
\small{\scshape\ignorespaces#1}\par\vskip1ex
\noindent\small{\itshape E-mail address}%
\/: #2\par\vskip4ex}\hfill%
\endgroup}%
\title{\uppercase{A molecular reconstruction theorem for} $H^{p(\cdot)}_{\omega}(\mathbb{R}^{n})$} 
\author{
%
%
\textsc{Pablo Rocha} 
}
\date{} 
\begin{document}

\maketitle

\footnote{ 
2020 \textit{Mathematics Subject Classification}.
Primary 42B30; Secondary 42B20, 42B25, 42B35.
}
\footnote{ 
\textit{Key words and phrases}:
weighted variable Hardy spaces, molecular decomposition, singular integrals, Riesz potential.
}

\begin{abstract}
In this article we give a molecular reconstruction theorem for $H_{\omega}^{p(\cdot)}(\mathbb{R}^{n})$. As an application of this result 
and the atomic decomposition developed in \cite{Ho1} we show that classical singular integrals can be extended to bounded operators on
$H_{\omega}^{p(\cdot)}(\mathbb{R}^{n})$. We also prove, for certain exponents $q(\cdot)$ and certain weights $\omega$, that 
Riesz potential $I_{\alpha}$, with $0 < \alpha < n$, can be extended to a bounded operator from
$H^{p(\cdot)}_{\omega}(\mathbb{R}^{n})$ into $H^{q(\cdot)}_{\omega}(\mathbb{R}^{n})$, for 
$\frac{1}{p(\cdot)} := \frac{1}{q(\cdot)} + \frac{\alpha}{n}$.
\end{abstract}

\section{Introduction}

In the celebrated paper \cite{Fefferman}, Fefferman and Stein defined the Hardy space $H^{p}(\mathbb{R}^{n})$. Since then, the study of Hardy spaces has received the attention of a substantial number of researchers. One of the most remarkable results for the study of Hardy
spaces is the atomic characterization of $H^{p}(\mathbb{R}^{n})$ (see \cite{Coifman, Latter}). Roughly speaking, every distribution 
$f \in H^{p}$ can be expressed of the form
\[
f = \sum_j \lambda_j a_j, 
\]
where the $a_j$'s are $p$ - atoms, $\{ \lambda_j \} \in \ell^{p}$ and $\| f \|^{p}_{H^{p}} \approx \sum_j | \lambda_j |^{p}$.
For $0 < p \leq 1$, an $p$ - atom is a function $a(\cdot)$ supported on a cube $Q$ such that
\[
\| a \|_{\infty} \leq |Q|^{-1/p} \,\,\  \text{and} \,\, \int x^{\alpha} a(x) dx = 0 \,\,\, \text{for all} \,\, |\alpha| \leq 
n \left( \frac{1}{p}-1 \right).
\]
Such decomposition allows to study the behavior of certain operators on $H^{p}$ by focusing one's attention on individual atoms.
In principle, the continuity of an operator $T$ on $H^{p}$ can often be proved by estimating $Ta$ when $a(\cdot)$ is an atom. In \cite{Coifman2} was observed that, in general, the atoms are not mapped into atoms. However, for many convolution operators (e.g.: singular integrals or Riesz potentials), $m=Ta$ behaves like an atom. These functions $m$ were called \textit{molecules}, their properties as well as the molecular characterization of $H^{p}(\mathbb{R}^{n})$ were established in \cite{Taibleson}. Then, in essence, the continuity
$H^{p} \to H^{p}$ of an operator reduces to showing that it maps atoms into molecules.

The theory of variable Hardy spaces $H^{p(\cdot)}(\mathbb{R}^{n})$ was first developed by Nakai and Sawano in \cite{Nakai}, where they proved
the infinite atomic decomposition and molecular decomposition for $H^{p(\cdot)}$. As a corollary of these decompositions, they obtained the boundedness of singular integral operators on $H^{p(\cdot)}$.
Later, Cruz-Uribe and Wang in \cite{Uribe3} independently considered the same problem under a weaker condition for variable exponents 
$p(\cdot)$. They gave a finite atomic decomposition for $H^{p(\cdot)}$ and also proved the boundedness of singular integral operators on
$H^{p(\cdot)}$.
Both theories prove equivalent definitions in terms of maximal operators using different approaches. In \cite{Rocha1}, the author jointly with Urciuolo proved  the $H^{p(\cdot)} - L^{q(\cdot)}$ boundedness of certain generalized Riesz potentials and the $H^{p(\cdot)} - H^{q(\cdot)}$ boundedness of Riesz potential via the infinite atomic and molecular decomposition developed in \cite{Nakai}. In \cite{Rocha3}, the author gave another proof of the results obtained in \cite{Rocha1}, but by using the finite atomic decomposition given in \cite{Uribe3}.

Recently, Kwok-Pun Ho in \cite{Ho1} developed the weighted theory for variable Hardy spaces on $\mathbb{R}^{n}$. He established the atomic decompositions for the weighted Hardy spaces with variable exponents $H^{p(\cdot)}_{\omega}(\mathbb{R}^{n})$ and also revealed some intrinsic structures of atomic decomposition for Hardy type spaces. His results generalize the infinite atomic decomposition obtained in \cite{Nakai}. By means of the atomic decomposition given in \cite{Ho1}, the author in \cite{Rocha4} proved, for certain exponents 
$q(\cdot)$ and certain weights $\omega$, the $H^{p(\cdot)}_{\omega} - L^{q(\cdot)}_{\omega}$ boundedness for Riesz potential. 

The purpose of this article is to generalize the molecular decomposition obtained in \cite[Theorem 5.2]{Nakai} to weighted variable Hardy spaces by using the framework developed in \cite{Ho1}.
As an application of our molecular decomposition, we prove two theorems concerning the $H^{p(\cdot)}_{\omega} - H^{p(\cdot)}_{\omega}$ boundedness of a class of singular integral operators and the $H^{p(\cdot)}_{\omega} - H^{q(\cdot)}_{\omega}$ boundedness of Riesz potential. These results are established in Theorems \ref{Boundedness T} and \ref{Boundedness Ialpha} below.

This paper is organized as follows. Section 2 gives weighted variable Hardy spaces $H^{p(\cdot)}_{\omega}$ and some of their preliminary results. Section 3 contains a molecular reconstruction theorem for $H^{p(\cdot)}_{\omega}$. The boundedness of singular integrals on 
$H^{p(\cdot)}_{\omega}$ is established in Section 4. The $H^{p(\cdot)}_{\omega} - H^{q(\cdot)}_{\omega}$ boundedness of 
Riesz potential is proved in Section 5. 

\

{\bf Notation.} The symbol $A\lesssim B$ stands for the inequality $A \leq c B$ for some positive constant $c$. The symbol $A \approx B$ 
stands for $B \lesssim A \lesssim B$. We denote by $Q\left( z,r\right)$ the cube centered at $z \in \mathbb{R}^{n}$ with side lenght $r$. Given a cube $Q = Q(z, r)$, we set $kQ = Q(z,kr)$ and $\ell(Q) = r$. For a measurable subset $E \subseteq \mathbb{R}^{n}$ we denote 
by $|E|$ and $\chi_{E}$ the Lebesgue measure of $E$ and the characteristic function of $E$ respectively. Given a real number $s \geq 0$, 
we write $\lfloor s \rfloor$ for the integer part of $s$. As usual we denote with $\mathcal{S}(\mathbb{R}^{n})$ the space of smooth and rapidly decreasing functions and with $\mathcal{S}'(\mathbb{R}^{n})$ the dual space. If $\beta$ is the multiindex 
$\beta =(\beta_{1},...,\beta _{n})$, then $|\beta| =\beta _{1}+...+\beta _{n}.$

\section{Preliminaries}

We begin with the definition of weighted variable Lebesgue spaces $L^{p(\cdot)}_{\omega}(\mathbb{R}^{n})$.

\

Let $p(\cdot) : \mathbb{R}^{n} \to (0, \infty)$ be a measurable function. Given a measurable set $E$, let
\[
p_{-}(E) = \essinf_{ x \in E } p(x), \,\,\,\, \text{and} \,\,\,\, p_{+}(E) = \esssup_{x \in E} p(x).
\]
When $E = \mathbb{R}^{n}$, we will simply write $p_{-} := p_{-}(\mathbb{R}^{n})$ and $p_{+} := p_{+}(\mathbb{R}^{n})$.

Given a measurable function $f$ on $\mathbb{R}^{n}$, define the modular $\rho$ associated with $p(\cdot)$ by
\[
\rho(f) = \int_{\mathbb{R}^{n}} |f(x)|^{p(x)} dx.
\]
We define the variable Lebesgue space $L^{p(\cdot)} = L^{p(\cdot)}(\mathbb{R}^{n})$ to be the set of all measurable functions $f$ such that, for some $\lambda > 0$, $\rho \left( f/\lambda \right) < \infty$. This becomes a quasi normed space when equipped with the Luxemburg norm
\[
\| f \|_{L^{p(\cdot)}} = \inf \left\{ \lambda > 0 : \rho \left( f/\lambda \right) \leq 1 \right\}.
\]

Given a weight $\omega$, i.e.: a locally integrable function on $\mathbb{R}^{n}$ such that $0 < \omega(x) < \infty$ almost everywhere, we define the weighted variable Lebesgue space $L^{p(\cdot)}_{\omega}$ as the set of all measurable functions 
$f : \mathbb{R}^{n} \to \mathbb{C}$ such that $\| f \omega \|_{L^{p(\cdot)}} < \infty$. If $f \in L^{p(\cdot)}_{\omega}$, we define 
its "norm" by
\[
\| f \|_{L^{p(\cdot)}_\omega} := \| f \omega \|_{L^{p(\cdot)}}. 
\]

The following result follows from the definition of the $L^{p(\cdot)}_{\omega}$-norm.

\begin{lemma} \label{potencia s}
Given a measurable function $p(\cdot) : \mathbb{R}^{n} \to (0, \infty)$ with $0 < p_{-} \leq p_{+} < \infty$ and a weight 
$\omega$, then for every $s > 0$,
\[
\| f \|_{L^{p(\cdot)}_\omega}^{s} = \| |f|^{s} \|_{L^{p(\cdot)/s}_{\omega^{s}}}.
\]
\end{lemma}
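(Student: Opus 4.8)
The plan is to reduce the claimed identity to the scalar behaviour of the Luxemburg norm under taking powers, via the definition $\|f\|_{L^{p(\cdot)}_\omega} = \|f\omega\|_{L^{p(\cdot)}}$. First I would observe that, by this definition,
\[
\| |f|^{s} \|_{L^{p(\cdot)/s}_{\omega^{s}}} = \big\| |f|^{s}\, \omega^{s} \big\|_{L^{p(\cdot)/s}} = \big\| \, |f\omega|^{s} \, \big\|_{L^{p(\cdot)/s}},
\]
so the whole statement follows once we know the unweighted fact that $\| |g|^{s} \|_{L^{p(\cdot)/s}} = \|g\|_{L^{p(\cdot)}}^{s}$ for every $g \in L^{p(\cdot)}$ and every $s>0$; we apply it with $g = f\omega$.

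Next I would prove that unweighted identity directly from the Luxemburg norm formula. The key computation is the elementary modular relation: for $\mu>0$,
\[
\int_{\mathbb{R}^{n}} \left( \frac{|g(x)|^{s}}{\mu} \right)^{\!p(x)/s} dx = \int_{\mathbb{R}^{n}} \left( \frac{|g(x)|}{\mu^{1/s}} \right)^{\!p(x)} dx,
\]
obtained by writing $\mu^{p(x)/s} = (\mu^{1/s})^{p(x)}$ inside the integrand. Hence $\rho_{p(\cdot)/s}\!\big(|g|^{s}/\mu\big) \le 1$ if and only if $\rho_{p(\cdot)}\!\big(g/\mu^{1/s}\big) \le 1$. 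Taking the infimum over the admissible $\mu$ and using the substitution $\lambda = \mu^{1/s}$ (a bijection of $(0,\infty)$ onto itself for $s>0$) gives
\[
\| |g|^{s} \|_{L^{p(\cdot)/s}} = \inf\{\mu>0 : \rho_{p(\cdot)/s}(|g|^{s}/\mu) \le 1\} = \inf\{\lambda^{s} : \rho_{p(\cdot)}(g/\lambda) \le 1\} = \|g\|_{L^{p(\cdot)}}^{s}.
\]
The hypothesis $0 < p_{-} \le p_{+} < \infty$ is what guarantees that $p(\cdot)/s$ is again an exponent with positive finite bounds, so that both Luxemburg functionals are genuinely defined and finite on the relevant functions; without it the manipulation of the infimum could be vacuous.

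I do not expect a serious obstacle here: the only mild subtlety is bookkeeping with the change of variables $\lambda \mapsto \lambda^{s}$ in the infimum and making sure the sets $\{\lambda : \rho_{p(\cdot)}(g/\lambda)\le 1\}$ and $\{\mu : \rho_{p(\cdot)/s}(|g|^{s}/\mu)\le 1\}$ correspond exactly under this map, which is immediate from the displayed modular identity. Finally I would combine the two halves: apply the unweighted identity to $g=f\omega$ and rewrite $|f\omega|^{s} \omega^{-s}\cdot\omega^{s}$, i.e. recognize $\||f\omega|^{s}\|_{L^{p(\cdot)/s}}$ as $\||f|^{s}\|_{L^{p(\cdot)/s}_{\omega^{s}}}$ by the definition of the weighted norm with weight $\omega^{s}$. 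This closes the proof.
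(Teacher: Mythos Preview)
Your proof is correct and is precisely what the paper intends: the paper does not give a detailed argument but simply states that the lemma ``follows from the definition of the $L^{p(\cdot)}_{\omega}$-norm'', and your proposal carries out exactly that unpacking via the modular identity and the substitution $\mu=\lambda^{s}$ in the Luxemburg infimum. There is nothing to add.
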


Next, we introduce the weights used in \cite{Ho1} to define weighted Hardy spaces with variable exponents.

\begin{definition} (See \cite[Remark 2.5]{Rocha4})\label{pesos Wp}
Let $p(\cdot) : \mathbb{R}^{n} \to (0, \infty)$ be a measurable function with $0 < p_{-} \leq p_{+} < \infty$. We define
$\mathcal{W}_{p(\cdot)}$ as the set of all weights $\omega$ such that
\[
(i) \,\, \text{there exists} \,\, 0 < p_{\ast} < \min \{1, p_{-} \} \,\, \text{such that} \,\, 
\| \chi_Q \|_{L^{p(\cdot)/p_{\ast}}_{\omega^{p_{\ast}}}} < \infty, \,\,\, \text{and}
\]
\[ 
\| \chi_Q \|_{L^{(p(\cdot)/p_{\ast})'}_{\omega^{-p_{\ast}}}} < \infty, \,\,\, \text{for all cube} \,\, Q;
\]
\[
(ii) \,\, \text{there exist $\kappa > 1$ and $s > 1$ such that Hardy-Littlewood maximal}
\]
\[ 
\text{operator $M$ is bounded on $L^{(sp(\cdot))'/\kappa}_{\omega^{-\kappa/s}}$}. 
\]
\end{definition}

For a measurable function $p(\cdot) : \mathbb{R}^{n} \to (0, \infty)$ such that $0 < p_{-}\leq p_{+} < \infty$ and 
$\omega \in \mathcal{W}_{p(\cdot)}$, in \cite{Ho1} the author give a variety of distinct approaches, based on
differing definitions, all lead to the same notion of weighted variable Hardy space $H^{p(\cdot)}_{\omega}$. 

We recall some terminologies and notations from the study of maximal functions. Given $N \in \mathbb{N}$, define 
\[
\mathcal{F}_{N}=\left\{ \varphi \in \mathcal{S}(\mathbb{R}^{n}):\sum\limits_{\left\vert \mathbf{\beta }\right\vert \leq N}\sup\limits_{x\in \mathbb{R}%
^{n}}\left( 1+\left\vert x\right\vert \right) ^{N}\left\vert \partial^{%
\mathbf{\beta }}\varphi (x)\right\vert \leq 1\right\}.
\]
For any $f \in \mathcal{S}'(\mathbb{R}^{n})$, the grand maximal function of $f$ is given by 
\[
\mathcal{M}_N f(x)=\sup\limits_{t>0}\sup\limits_{\varphi \in \mathcal{F}_{N}}\left\vert \left( \varphi_t \ast f\right)(x) \right\vert,
\]
where $\varphi_t(x) = t^{-n} \varphi(t^{-1} x)$. It is common to denote the grand maximal by $\mathcal{M}$.

\begin{definition} Let $p(\cdot):\mathbb{R}^{n} \to ( 0,\infty)$, $0 < p_{-} \leq p_{+} < \infty $, and 
$\omega \in \mathcal{W}_{p(\cdot)}$. The weighted variable Hardy space $H^{p(.)}_{\omega}(\mathbb{R}^{n})$ is the set of 
all $f \in \mathcal{S}'(\mathbb{R}^{n})$ for which $\| \mathcal{M}f \|_{L^{p(\cdot)}_{\omega}} < \infty$. In this case we define 
$\| f \|_{H^{p(\cdot)}_{\omega}} := \| \mathcal{M}f \|_{L^{p(\cdot)}_{\omega}}$.
\end{definition}

\begin{definition} \label{def atom} Let $p(\cdot):\mathbb{R}^{n} \to ( 0,\infty)$, $0 < p_{-} \leq p_{+} < \infty $, $p_{0} > 1$, and 
$\omega \in \mathcal{W}_{p(\cdot)}$. Fix an integer $N \geq 1$. A function $a(\cdot)$ 
on $\mathbb{R}^{n}$ is called a $\omega-(p(\cdot), p_{0}, N)$ atom if there exists a cube $Q$ such that

$a_{1})$ $\supp ( a ) \subset Q$,

$a_{2})$ $\| a \|_{L^{p_{0}}}\leq \frac{| Q |^{\frac{1}{p_{0}}}}{\| \chi _{Q} \|_{L^{p(\cdot)}_{\omega}}}$,

$a_{3})$ $\displaystyle{\int} x^{\beta}  a(x) \, dx = 0$ for all $| \beta | \leq N$.
\end{definition}

Now, we introduce two indices, which are related to the intrinsic structure of the atomic decomposition of $H^{p(\cdot)}_{\omega}$.
Given $\omega \in \mathcal{W}_{p(\cdot)}$, we write
\[
s_{\omega, \, p(\cdot)} := \inf \left\{ s \geq 1 : M \,\, \text{is bounded on} \,\, L^{(sp(\cdot))'}_{\omega^{-1/s}} \right\}
\]
and
\[
\mathbb{S}_{\omega, \, p(\cdot)} := \left\{ s \geq 1 : M \,\, \text{is bounded on} \,\, L^{(sp(\cdot))'/\kappa}_{\omega^{-\kappa/s}} \,\,
\text{for some} \,\, \kappa > 1 \right\}.
\]
Then, for every fixed $s \in \mathbb{S}_{\omega, \, p(\cdot)}$, we define
\[
\kappa_{\omega, \, p(\cdot)}^{s} := \sup \left\{ \kappa > 1 : M \,\, \text{is bounded on} \,\, L^{(sp(\cdot))'/\kappa}_{\omega^{-\kappa/s}} \right\}.
\]
The index $\kappa_{\omega, \, p(\cdot)}^{s}$ is used to measure the left-openness of the boundedness of $M$ on the family 
$\left\{ L^{(sp(\cdot))'/\kappa}_{\omega^{-\kappa/s}} \right\}_{\kappa > 1}$. The index $s_{\omega, \, p(\cdot)}$ is related to the vanishing moment condition and the index $\kappa_{\omega, \, p(\cdot)}^{s}$ is related to the size condition of the atoms 
(see \cite[Theorems 5.3 and 6.3]{Ho1}). These indices are also related to the structure of our molecular decomposition 
(see Definition \ref{def molecule} and Theorem \ref{molecular reconst} below).

\begin{definition} Let $p(\cdot):\mathbb{R}^{n} \to ( 0,\infty)$ be a measurable function such that $0 < p_{-} \leq p_{+} < \infty$, 
and let $\omega \in \mathcal{W}_{p(\cdot)}$. Given a sequence of scalars $\{ \lambda_j \}_{j=1}^{\infty}$, a family of 
cubes $\{ Q_j \}_{j=1}^{\infty}$ and $0 < \theta < \infty$, we define
\[
\mathcal{A} \left( \{ \lambda_j \}_{j=1}^{\infty}, \{ Q_j \}_{j=1}^{\infty}, p(\cdot), \omega, \theta \right) := 
\left\| \sum_{j=1}^{\infty} \left( \frac{|\lambda_j |}{\| \chi_{Q_j} \|_{L^{p(\cdot)}_{\omega}}} \right)^{\theta} \chi_{Q_j} \right\|_{L^{p(\cdot)/\theta}_{\omega^{\theta}}}^{1/\theta}.
\]
\end{definition}

The following theorem is a version of the atomic decomposition for $H^{p(\cdot)}_{\omega}$ obtained in \cite{Ho1}.

\begin{theorem} \label{w atomic decomp}
Let $1 < p_0 < \infty$, $p(\cdot) : \mathbb{R}^{n} \to (0, \infty)$ be a measurable function with $0 < p_{-} \leq p_{+} < \infty$ and 
$\omega \in \mathcal{W}_{p(\cdot)}$. Then, for every $f \in H^{p(\cdot)}_{\omega}(\mathbb{R}^{n}) \cap L^{p_0}(\mathbb{R}^{n})$ and
every integer $N \geq \lfloor n s_{\omega, \, p(\cdot)} - n \rfloor$ fixed, there exist a sequence of scalars 
$\{ \lambda_j \}_{j=1}^{\infty}$, a sequence of cubes $\{ Q_j \}_{j=1}^{\infty}$ and $\omega - (p(\cdot), p_0, N)$ atoms $a_j$ supported 
on $Q_j$ such that $f = \displaystyle{\sum_{j=1}^{\infty} \lambda_j a_j}$ converges in $L^{p_0}(\mathbb{R}^{n})$ and
\begin{equation} \label{Hpw norm atomic}
\mathcal{A} \left( \{ \lambda_j \}_{j=1}^{\infty}, \{ Q_j \}_{j=1}^{\infty}, p(\cdot), \omega, \theta \right) 
\lesssim \| f \|_{H^{p(\cdot)}_{\omega}}, \,\,\, \text{for all} \,\,\, 0 < \theta < \infty,
\end{equation}
where the implicit constant in (\ref{Hpw norm atomic}) is independent of $\{ \lambda_j \}_{j=1}^{\infty}$, $\{ Q_j \}_{j=1}^{\infty}$, and 
$f$.
\end{theorem}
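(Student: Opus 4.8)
The plan is to run the Fefferman--Stein--Calder\'on--Zygmund decomposition of $f$ governed by its grand maximal function $\mathcal{M}f$, carried out in the weighted variable framework of \cite{Ho1} (see also \cite{Nakai} for the unweighted variable case). Fix $f \in H^{p(\cdot)}_{\omega} \cap L^{p_0}$, $f \not\equiv 0$, and for $k \in \mathbb{Z}$ put $\Omega_k := \{ x \in \mathbb{R}^n : \mathcal{M}f(x) > 2^k \}$, which is open and, since $p_0 > 1$ forces $\mathcal{M}f \lesssim Mf \in L^{p_0}$, of finite measure. I would take a Whitney decomposition $\{ Q_{k,i} \}_{i}$ of $\Omega_k$ and a subordinate smooth partition of unity $\{ \zeta_{k,i} \}_{i}$, with $\supp \zeta_{k,i} \subset c\, Q_{k,i} \subset \Omega_k$, $\sum_i \zeta_{k,i} = \chi_{\Omega_k}$, and uniformly bounded overlap of $\{ c\, Q_{k,i} \}_i$. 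Put $b_{k,i} := (f - P_{k,i})\zeta_{k,i}$, where $P_{k,i}$ is the unique polynomial of degree $\le N$ with $\int (f - P_{k,i})\zeta_{k,i}\, x^{\beta}\, dx = 0$ for all $|\beta| \le N$, and $g_k := f - \sum_i b_{k,i}$. The atoms arise, as in the classical argument, from telescoping $f = \sum_{k\in\mathbb{Z}}(g_{k+1}-g_k)$ and expanding each difference against the partition of unity at level $k+1$: this produces functions $A_{k,i}$, each supported on a fixed dilate $Q_j := c\, Q_{k,i}$ of a Whitney cube, with $\int A_{k,i}\, x^{\beta}\, dx = 0$ for $|\beta| \le N$ and $\| A_{k,i} \|_{L^{p_0}} \lesssim 2^k |Q_j|^{1/p_0}$; this last size bound rests on the polynomial estimate $\|P_{k,i}\|_{L^{\infty}(c Q_{k,i})} \lesssim 2^k$ (valid because the $L^1$-average of $|f|$ over $c Q_{k,i}$ is controlled by $\mathcal{M}f$ at a nearby point of $\Omega_k^c$) together with the pointwise domination $|f| \le C\,\mathcal{M}f \le C\, 2^{k+1}$ on $\Omega_{k+1}^c$, which absorbs the non-cancelling part of $f$. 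Relabelling $(k,i) \mapsto j$, set $\lambda_j := C\, 2^k \| \chi_{Q_j} \|_{L^{p(\cdot)}_{\omega}}$ (condition (i) of Definition \ref{pesos Wp} guarantees $0 < \| \chi_{Q_j} \|_{L^{p(\cdot)}_\omega} < \infty$) and $a_j := \lambda_j^{-1}A_{k,i}$; then $a_1), a_3)$ hold by construction and $a_2)$ holds since $\| a_j \|_{L^{p_0}} \le \lambda_j^{-1}C\, 2^k|Q_j|^{1/p_0} = |Q_j|^{1/p_0}/\|\chi_{Q_j}\|_{L^{p(\cdot)}_\omega}$, so each $a_j$ is an $\omega$-$(p(\cdot), p_0, N)$ atom.

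For the convergence $f = \sum_j \lambda_j a_j$ in $L^{p_0}$, it suffices to show $\| g_k - f \|_{L^{p_0}} \to 0$ as $k \to +\infty$ and $\| g_k \|_{L^{p_0}} \to 0$ as $k \to -\infty$, since $\sum_{|k|\le K}(g_{k+1}-g_k) = g_{K+1}-g_{-K}$. Using $g_k = f$ on $\Omega_k^c$ and $|g_k| \lesssim 2^k$ on $\Omega_k$ one gets $\|g_k\|_{L^{p_0}}^{p_0} \lesssim \int_{\Omega_k^c}|f|^{p_0} + 2^{kp_0}|\Omega_k|$; as $k \to -\infty$ the first term tends to $0$ by dominated convergence ($\Omega_k^c \downarrow \{\mathcal{M}f = 0\}$, a null set), and $2^{kp_0}|\Omega_k| = 2^{kp_0}|\{\mathcal{M}f > 2^k\}| \to 0$ because $\mathcal{M}f \in L^{p_0}$. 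The limit $k \to +\infty$ is handled symmetrically.

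The core estimate is (\ref{Hpw norm atomic}). Because $\lambda_j \approx 2^k\|\chi_{Q_j}\|_{L^{p(\cdot)}_\omega}$ and the $\{c\,Q_{k,i}\}_i$ have bounded overlap inside $\Omega_k$, for every $0 < \theta < \infty$,
\[
\sum_j \left( \frac{|\lambda_j|}{\| \chi_{Q_j}\|_{L^{p(\cdot)}_\omega}} \right)^{\theta} \chi_{Q_j} \lesssim \sum_{k\in\mathbb{Z}} 2^{k\theta}\sum_i \chi_{c\,Q_{k,i}} \lesssim \sum_{k\in\mathbb{Z}} 2^{k\theta}\chi_{\Omega_k} \approx (\mathcal{M}f)^{\theta},
\]
the last step being the elementary pointwise identity $\sum_{k\in\mathbb{Z}} 2^{k\theta}\chi_{\{\mathcal{M}f > 2^k\}} \approx (\mathcal{M}f)^{\theta}$. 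Taking $L^{p(\cdot)/\theta}_{\omega^\theta}$-quasi-norms, using monotonicity of the quasi-norm and then Lemma \ref{potencia s} with $s = \theta$,
\[
\mathcal{A}\big(\{\lambda_j\}, \{Q_j\}, p(\cdot), \omega, \theta\big) \lesssim \big\| (\mathcal{M}f)^{\theta}\big\|_{L^{p(\cdot)/\theta}_{\omega^{\theta}}}^{1/\theta} = \| \mathcal{M}f\|_{L^{p(\cdot)}_\omega} = \| f\|_{H^{p(\cdot)}_\omega},
\]
with constant independent of $f$ and of the decomposition.

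The routine part is the unweighted constant-exponent bookkeeping: the Whitney decomposition, the partition of unity, the polynomial estimates, and the combinatorics producing $A_{k,i}$ with its support, moments and size. I expect the main obstacle to be the weighted variable analysis underpinning the steps invoked above: that the grand maximal function $\mathcal{M}$ genuinely controls the Calder\'on--Zygmund construction, and that the $L^{p(\cdot)/\theta}_{\omega^\theta}$-quasi-norm interacts with sums of dilated Whitney cubes as claimed. This is exactly where conditions (i)--(ii) of Definition \ref{pesos Wp}, the boundedness of $M$ on the associate-type spaces $L^{(sp(\cdot))'/\kappa}_{\omega^{-\kappa/s}}$, and the threshold $N \ge \lfloor n s_{\omega, p(\cdot)} - n \rfloor$ on the vanishing-moment order come into play, just as in the atomic theory of \cite{Ho1}; granting those weighted ingredients, the skeleton above is a standard adaptation of the unweighted argument.
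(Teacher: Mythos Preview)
Your sketch is correct and follows the same route as the paper. The paper's own proof is simply a citation: it invokes \cite[Theorem 6.2]{Ho1} for the atomic decomposition and the estimate (\ref{Hpw norm atomic}), and then \cite[Theorem 3.1]{Rocha2} for the $L^{p_0}$ convergence of the atomic series; what you have written is precisely the Calder\'on--Zygmund construction underlying those references, so there is no genuine difference in approach, only in level of detail.
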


\begin{proof}
The existence of a such atomic decomposition is guaranteed by \cite[Theorem 6.2]{Ho1}. Its construction is analogous to that given for classical Hardy spaces (see \cite[Chapter III]{Stein}). So, following the proof in \cite[Theorem 3.1]{Rocha2}, we obtain the convergence of the atomic series to $f$ in $L^{p_0}(\mathbb{R}^{n})$.
\end{proof}

The following three results will be useful in what follows.

\begin{lemma} (\cite[Lemma 3.4]{Rocha4}) \label{2Q}
Let $p(\cdot) : \mathbb{R}^{n} \to (0, \infty)$ be a measurable function with $0 < p_{-} \leq p_{+} < \infty$. If 
$\omega \in \mathcal{W}_{p(\cdot)}$, then, for every cube $Q \subset \mathbb{R}^{n}$,
\[
\| \chi_{2\sqrt{n}Q} \|_{L^{p(\cdot)}_\omega}  \approx \| \chi_{Q} \|_{L^{p(\cdot)}_\omega} .
\]
\end{lemma}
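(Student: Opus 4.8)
The plan is to prove only the nontrivial inequality: $\|\chi_Q\|_{L^{p(\cdot)}_\omega}\le\|\chi_{2\sqrt nQ}\|_{L^{p(\cdot)}_\omega}$ is immediate from $\chi_Q\le\chi_{2\sqrt nQ}$ and the monotonicity of the Luxemburg norm, so everything reduces to showing $\|\chi_{2\sqrt nQ}\|_{L^{p(\cdot)}_\omega}\lesssim\|\chi_Q\|_{L^{p(\cdot)}_\omega}$ with a constant independent of the cube $Q$. First I would pass to the ``Banach range'': choosing $p_*\in(0,\min\{1,p_-\})$ as in Definition~\ref{pesos Wp}(i) and setting $q(\cdot):=p(\cdot)/p_*$, $v:=\omega^{p_*}$ (so that $1<q_-\le q_+<\infty$), Lemma~\ref{potencia s} together with $\chi_E^{p_*}=\chi_E$ gives $\|\chi_E\|_{L^{p(\cdot)}_\omega}=\|\chi_E\|_{L^{q(\cdot)}_v}^{1/p_*}$ for every measurable $E$, so it suffices to prove $\|\chi_{2\sqrt nQ}\|_{L^{q(\cdot)}_v}\lesssim\|\chi_Q\|_{L^{q(\cdot)}_v}$. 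Here $X:=L^{q(\cdot)}_v$ is a Banach function space whose associate space is (norm-equivalent to) $X'=L^{q'(\cdot)}_{v^{-1}}=L^{(p(\cdot)/p_*)'}_{\omega^{-p_*}}$, and $\|\chi_Q\|_{X'}<\infty$ by Definition~\ref{pesos Wp}(i); the structural role of condition~(ii), with its auxiliary parameters $s,\kappa>1$, is precisely to guarantee — for a suitable choice of $p_*$ — that the Hardy--Littlewood maximal operator $M$ is bounded on a space equivalent to $X'$.

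Next I would use duality. By the norm--conjugate formula for (weighted) variable Lebesgue spaces,
\[
\|\chi_{2\sqrt nQ}\|_{X}\approx\sup\Big\{\int_{2\sqrt nQ}|g(x)|\,dx:\ \|g\|_{X'}\le1\Big\}.
\]
Fix such a $g$. For each $x\in Q$ the cube centered at $x$ of side length $3\sqrt n\,\ell(Q)$ contains $2\sqrt nQ$ and has measure $(3\sqrt n)^n|Q|$, so averaging $|g|\chi_{2\sqrt nQ}$ over it yields
\[
\frac{\chi_Q(x)}{(3\sqrt n)^n\,|Q|}\int_{2\sqrt nQ}|g|\ \le\ M\big(g\,\chi_{2\sqrt nQ}\big)(x),\qquad x\in\mathbb R^n.
\]
Taking $X'$-norms and invoking the boundedness of $M$ on $X'$ together with $\|g\,\chi_{2\sqrt nQ}\|_{X'}\le\|g\|_{X'}\le1$,
\[
\frac{1}{(3\sqrt n)^n\,|Q|}\Big(\int_{2\sqrt nQ}|g|\Big)\,\|\chi_Q\|_{X'}\ \lesssim\ \big\|M(g\,\chi_{2\sqrt nQ})\big\|_{X'}\ \lesssim\ 1.
\]

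To close the loop I would use Hölder's inequality for the pairing $(X,X')$: $|Q|=\int\chi_Q\chi_Q\le\|\chi_Q\|_{X}\,\|\chi_Q\|_{X'}$, hence $|Q|/\|\chi_Q\|_{X'}\le\|\chi_Q\|_{X}$. Substituting this into the previous display gives $\int_{2\sqrt nQ}|g|\lesssim\|\chi_Q\|_{X}$ uniformly over all $g$ with $\|g\|_{X'}\le1$; combined with the duality formula this yields $\|\chi_{2\sqrt nQ}\|_{X}\lesssim\|\chi_Q\|_{X}$, and raising to the power $1/p_*$ finishes the argument.

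I expect the main obstacle to be the bookkeeping in the first step, not the maximal-function estimate: one must verify that the conditions defining $\mathcal W_{p(\cdot)}$ — especially (ii), stated through the parameters $s$, $\kappa>1$ and the weight power $\omega^{-\kappa/s}$ — do transfer, after the $p_*$-rescaling, into boundedness of $M$ on a space equivalent to the associate space $L^{(p(\cdot)/p_*)'}_{\omega^{-p_*}}$ of $L^{p(\cdot)/p_*}_{\omega^{p_*}}$. This rests on the correct identification of associate spaces of weighted variable Lebesgue spaces in the Banach range and on the ``left-openness'' of the $M$-boundedness encoded by the index $\kappa^{s}_{\omega,p(\cdot)}$; once these are granted, as in the framework of \cite{Ho1}, the remaining steps are the standard dilated-cube/maximal-function routine.
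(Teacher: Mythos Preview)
The paper does not prove this lemma; it is quoted verbatim from \cite[Lemma~3.4]{Rocha4}, so there is no in-paper argument to compare your proposal against directly. Your duality strategy is a standard and reasonable route, and you correctly flag the only nontrivial point: verifying that $\omega\in\mathcal{W}_{p(\cdot)}$ actually delivers boundedness of $M$ on the associate space $X'=L^{(p(\cdot)/p_\ast)'}_{\omega^{-p_\ast}}$. Note, however, that condition~(ii) in Definition~\ref{pesos Wp} gives $M$ bounded on $L^{(sp(\cdot))'/\kappa}_{\omega^{-\kappa/s}}$, and this space is \emph{not} of the form $L^{(p(\cdot)/p_\ast)'}_{\omega^{-p_\ast}}$ for any $p_\ast$ (the exponents do not match unless $\kappa=1$). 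What you really need is that $s_{\omega,p(\cdot)}<\infty$, i.e.\ that $M$ is bounded on $L^{(sp(\cdot))'}_{\omega^{-1/s}}$ for some $s>1$; then taking $p_\ast=1/s$ with $s>\max\{s_{\omega,p(\cdot)},1/p_-\}$ makes your argument go through. That finiteness is part of the framework of \cite{Ho1} and is used implicitly throughout the present paper, but it does not follow from your bookkeeping as written.

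There is a shorter argument, and the paper itself uses it (in the proof of Theorem~\ref{molecular reconst}, estimate of $J_1$): one has the pointwise bound $\chi_{2\sqrt{n}Q}\le c_n\,M(\chi_Q)$, so for any $r>s_{\omega,p(\cdot)}$, by Lemma~\ref{potencia s} and the boundedness of $M$ on $L^{rp(\cdot)}_{\omega^{1/r}}$ from \cite[Theorem~3.1]{Ho1},
\[
\|\chi_{2\sqrt{n}Q}\|_{L^{p(\cdot)}_\omega}
=\|\chi_{2\sqrt{n}Q}\|_{L^{rp(\cdot)}_{\omega^{1/r}}}^{\,r}
\lesssim \|M(\chi_Q)\|_{L^{rp(\cdot)}_{\omega^{1/r}}}^{\,r}
\lesssim \|\chi_Q\|_{L^{rp(\cdot)}_{\omega^{1/r}}}^{\,r}
=\|\chi_Q\|_{L^{p(\cdot)}_\omega}.
\]
This avoids duality and the associate-space identification entirely, relying only on the scalar maximal bound already invoked elsewhere in the paper; it is almost certainly the argument behind the cited \cite[Lemma~3.4]{Rocha4}. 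Your duality route buys nothing extra here and costs the additional verification you yourself singled out.
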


We say that an exponent function $p(\cdot) : \mathbb{R}^{n} \to (0, \infty)$ such that $0 < p_{-} \leq p_{+} < \infty$ belongs 
to $\mathcal{P}^{\log}(\mathbb{R}^{n})$, if there exist two positive constants $C$ and $C_{\infty}$ such that $p(\cdot)$ satisfies the 
local log-H\"older continuity condition, i.e.:
\[
|p(x) - p(y)| \leq \frac{C}{-\log(|x-y|)}, \,\,\, |x-y| \leq \frac{1}{2},
\]
and is log-H\"older continuous at infinity, i.e.:
\[
|p(x) - p_{\infty}| \leq \frac{C_{\infty}}{\log(e+|x|)}, \,\,\, x \in \mathbb{R}^{n},
\]
for some $p_{-} \leq p_{\infty} \leq p_{+}$.

We define the set $\mathcal{S}_{0}(\mathbb{R}^{n})$ by
\[
\mathcal{S}_{0}(\mathbb{R}^{n}) =\left\{ \varphi \in \mathcal{S}(\mathbb{R}^{n}) : \int x^{\beta} \varphi(x) dx = 0, \,\, \text{for all} \, 
\beta \in \mathbb{N}_{0}^{n} \right\}.
\] 

\begin{proposition} (\cite[Proposition 2.1]{Ho2}) \label{dense}
Let $p(\cdot) \in \mathcal{P}^{\log}(\mathbb{R}^{n})$ with $0 < p_{-} \leq p_{+} < \infty$. If $\omega \in \mathcal{W}_{p(\cdot)}$, then
$\mathcal{S}_{0}(\mathbb{R}^{n}) \subset H^{p(\cdot)}_{\omega}(\mathbb{R}^{n})$ densely.
\end{proposition}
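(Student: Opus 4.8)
I would split the proposition into its two assertions and handle them separately.

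\textit{The inclusion} $\mathcal{S}_0\subset H^{p(\cdot)}_\omega$. Let $g\in\mathcal{S}_0$. Since $g$ is Schwartz and all its moments vanish, the usual argument — Taylor-expand one factor in $\varphi_t*g$, $\varphi\in\mathcal{F}_N$, splitting according to the size of the integration variable relative to $|x|$, and use $\int y^\beta g\,dy=0$ — produces a rapid decay bound $\mathcal{M}g(x)\lesssim(1+|x|)^{-L}$ for the grand maximal function, where $L$ can be made as large as one pleases by taking $N$ large (and $H^{p(\cdot)}_\omega$ is insensitive to the precise large $N$). Decomposing $\mathbb{R}^n$ into the cubes $A_0=Q(0,2)$ and $A_k=Q(0,2^{k+1})\setminus Q(0,2^k)$ $(k\ge1)$, on which $\mathcal{M}g\lesssim2^{-kL}$, and combining the quasi-norm inequality $\|\sum_k h_k\|_{L^{p(\cdot)}_\omega}^{\,r}\le\sum_k\|h_k\|_{L^{p(\cdot)}_\omega}^{\,r}$, with $r:=\min\{1,p_-\}$, with the doubling bound $\|\chi_{Q(0,2^k)}\|_{L^{p(\cdot)}_\omega}\lesssim2^{k\gamma}$ for some $\gamma>0$ (obtained by iterating Lemma \ref{2Q}), we get
\[
\|\mathcal{M}g\|_{L^{p(\cdot)}_\omega}^{\,r}\lesssim\sum_{k\ge0}2^{-kLr}\,\|\chi_{Q(0,2^{k+1})}\|_{L^{p(\cdot)}_\omega}^{\,r}\lesssim\sum_{k\ge0}2^{(\gamma-L)kr}<\infty
\]
for $L>\gamma$, so $g\in H^{p(\cdot)}_\omega$.

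\textit{Density, first move.} The plan is to replace a general $f$ by a tractable object and then regularize it into $\mathcal{S}_0$. By the atomic decomposition of \cite{Ho1} (see Theorem \ref{w atomic decomp} and the companion estimate bounding an atomic sum by its $\mathcal{A}$-functional), every $f\in H^{p(\cdot)}_\omega$ is the $H^{p(\cdot)}_\omega$-limit of its finite partial sums $F=\sum_{j\le K}\lambda_j a_j$, and each such $F$ is compactly supported, lies in $L^{p_0}$, and has vanishing moments up to the prescribed order $N$, which we may take very large. So it suffices to approximate such an $F$ in $H^{p(\cdot)}_\omega$ by elements of $\mathcal{S}_0$. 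Here the elementary but decisive observation is that $\psi*F\in\mathcal{S}_0$ for every $\psi\in\mathcal{S}_0$: on the Fourier side $\widehat{\psi*F}=\widehat\psi\,\widehat F$, where $\widehat\psi\in\mathcal{S}$ vanishes to infinite order at the origin and $\widehat F$ together with all its derivatives is bounded ($F$ being integrable with compact support), so $\widehat{\psi*F}\in\mathcal{S}$ and vanishes to infinite order at $0$. Consequently, if $\{\psi_j\}_{j\in\mathbb{Z}}$ is a Littlewood--Paley system built from $\psi\in\mathcal{S}_0$ with $\sum_j\widehat\psi(2^{-j}\xi)=1$ for $\xi\ne0$, every partial sum $S_M F:=\sum_{|j|\le M}\psi_j*F$ belongs to $\mathcal{S}_0$.

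\textit{Density, second move.} It then remains to show $S_M F\to F$ in $H^{p(\cdot)}_\omega$ as $M\to\infty$. Writing $S_M F=F*\phi_{2^{-M}}-F*\phi_{2^{M+1}}$ for a suitable band-limited Schwartz $\phi$ with $\int\phi=1$ and $\widehat\phi\equiv1$ near $0$, the low-frequency remainder $F*\phi_{2^{M+1}}$ is controlled exactly as in the inclusion part via the vanishing moments of $F$, and one is reduced to the fact that $F*\phi_\delta\to F$ in $H^{p(\cdot)}_\omega$ as $\delta\to0^+$. The difference $G_\delta:=F*\phi_\delta-F$ retains the vanishing moments of $F$, is supported in a fixed cube up to a tail that is $O(\delta^{L})$-small and decays like $(1+|x|)^{-L}$ for every $L$, and satisfies $\|G_\delta\|_{L^{p_0}}\to0$; decomposing $G_\delta$ over dyadic annuli, restoring vanishing moments with correcting polynomials and redistributing them (a Taibleson--Weiss type step, available here through the atomic machinery of \cite{Ho1} and Lemma \ref{2Q}), one writes $G_\delta$ as an atomic sum whose $\mathcal{A}$-functional is $\lesssim\delta^{L}\sum_k2^{-k(L-\gamma)}\to0$ for $L>\gamma$, hence $\|G_\delta\|_{H^{p(\cdot)}_\omega}\to0$. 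Choosing $M$ large and then $\delta$ small yields an element of $\mathcal{S}_0$ arbitrarily close to $f$ in $H^{p(\cdot)}_\omega$.

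\textit{Main obstacle.} The genuinely delicate point is this convergence $S_M F\to F$ in $H^{p(\cdot)}_\omega$, equivalently the estimate for $G_\delta$: one must bound the grand maximal function, in the weighted variable quasi-norm, of a function that is only compactly supported up to a rapidly decaying tail, which forces one to orchestrate the $\min\{1,p_-\}$-quasi-norm inequality, the doubling property of Lemma \ref{2Q}, and the uniform membership of atoms in $H^{p(\cdot)}_\omega$ established in \cite{Ho1}. The inclusion part and the reduction to $\mathcal{S}_0$ via $S_M F$ are, by comparison, routine.
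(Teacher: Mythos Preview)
The paper does not supply a proof of this proposition at all; it is quoted verbatim from \cite[Proposition 2.1]{Ho2}. So there is no in-paper argument to compare against, and your attempt is a self-contained sketch of how one might prove the result.

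Your inclusion argument is sound: for $g\in\mathcal{S}_0$ the grand maximal function does decay as fast as one wishes (this is exactly where the vanishing moments are used, to kill the large-$t$ contribution), and the dyadic-shell summation together with the doubling estimate from Lemma~\ref{2Q} then places $\mathcal{M}g$ in $L^{p(\cdot)}_\omega$.

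The density sketch has the right architecture --- reduce to finite atomic sums $F$, then mollify with a Littlewood--Paley system so that $S_MF\in\mathcal{S}_0$ --- but two points are not fully justified. First, in the ``first move'' you invoke Theorem~\ref{w atomic decomp}, which in this paper is stated only for $f\in H^{p(\cdot)}_\omega\cap L^{p_0}$; to approximate a general $f\in H^{p(\cdot)}_\omega$ by finite atomic sums you need the unrestricted atomic decomposition from \cite[Theorem 6.3]{Ho1} together with the reconstruction estimate, and you should also note that the tail $\mathcal{A}$-functional tends to zero by dominated convergence in $L^{p(\cdot)/\theta}_{\omega^\theta}$. Second, and more substantively, in the ``second move'' your displayed bound for the $\mathcal{A}$-functional of $G_\delta$, namely $\lesssim\delta^{L}\sum_k 2^{-k(L-\gamma)}$, accounts only for the far-away tail of $G_\delta$. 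The part of $G_\delta$ sitting on a fixed neighbourhood of $\operatorname{supp}F$ is not $O(\delta^L)$; it is only controlled by $\|G_\delta\|_{L^{p_0}}\to0$, and this near contribution has to be tracked separately (it produces a single atom with coefficient $\lesssim\|G_\delta\|_{L^{p_0}}\|\chi_{Q_0}\|_{L^{p(\cdot)}_\omega}/|Q_0|^{1/p_0}$). Likewise, the ``restoring vanishing moments with correcting polynomials'' step is a genuine Taibleson--Weiss telescoping argument whose bookkeeping --- the size of the polynomial corrections and why the corrections cancel across consecutive shells --- cannot simply be invoked by name in this weighted variable setting; it needs to be checked against Definition~\ref{def atom} and the $\mathcal{A}$-functional. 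None of these gaps is fatal, and your ``Main obstacle'' paragraph correctly locates where the work lies, but as written the second move is a programme rather than a proof.
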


We conclude this preliminaries with a supporting result, which will allow us to study the behavior of Riesz potential 
on $H^{p(\cdot)}_{\omega}$.

\begin{proposition} (\cite[Corollary 4.2]{Rocha4}) \label{Aq menor Ap}
Let $0 < \alpha < n$, $q(\cdot) : \mathbb{R}^{n} \to (0, \infty)$ be a measurable function with 
$0 < q_{-} \leq q_{+} < \infty$ and $\omega \in \mathcal{W}_{q(\cdot)}$. If $\frac{1}{p(\cdot)} := \frac{1}{q(\cdot)} + \frac{\alpha}{n}$  and $\| \chi_Q \|_{L^{q(\cdot)}_{\omega}} \approx |Q|^{-\alpha/n} \| \chi_Q \|_{L^{p(\cdot)}_{\omega}}$ for every cube $Q$, 
then for any sequence of scalars $\{ \lambda_j \}_{j=1}^{\infty}$, any family of cubes $\{ Q_j \}_{j=1}^{\infty}$, and any
$\theta \in (0, \infty)$ fixed we have
\[
\mathcal{A} \left( \{ \lambda_j \}_{j=1}^{\infty}, \{ Q_j \}_{j=1}^{\infty}, q(\cdot), \omega, \theta \right)
\lesssim
\mathcal{A} \left( \{ \lambda_j \}_{j=1}^{\infty}, \{ Q_j \}_{j=1}^{\infty}, p(\cdot), \omega, \theta \right).
\]
\end{proposition}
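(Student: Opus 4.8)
The plan is to collapse both $\mathcal{A}$-functionals, via the size hypothesis, onto a single rescaled inequality, and then to absorb the (possibly heavy) overlap of the cubes $\{Q_j\}$ by dominating the left-hand side pointwise by a fractional operator applied to the function whose norm appears on the right.

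\textbf{Step 1 (reduction).} A routine truncation (monotone convergence) lets us assume the family $\{Q_j\}$ is finite. Put $\sigma_j:=\bigl(|\lambda_j|/\|\chi_{Q_j}\|_{L^{p(\cdot)}_{\omega}}\bigr)^{\theta}\ge 0$. Since $\|\chi_{Q_j}\|_{L^{q(\cdot)}_{\omega}}\approx|Q_j|^{-\alpha/n}\|\chi_{Q_j}\|_{L^{p(\cdot)}_{\omega}}$ and comparable functions have comparable norms, the definition of $\mathcal{A}$ yields
\[
\mathcal{A}\bigl(\{\lambda_j\},\{Q_j\},q(\cdot),\omega,\theta\bigr)^{\theta}\approx\Bigl\|\sum_{j}\sigma_j\,|Q_j|^{\theta\alpha/n}\chi_{Q_j}\Bigr\|_{L^{q(\cdot)/\theta}_{\omega^{\theta}}},\qquad\mathcal{A}\bigl(\{\lambda_j\},\{Q_j\},p(\cdot),\omega,\theta\bigr)^{\theta}=\Bigl\|\sum_{j}\sigma_j\,\chi_{Q_j}\Bigr\|_{L^{p(\cdot)/\theta}_{\omega^{\theta}}}.
\]
Set $P(\cdot):=p(\cdot)/\theta$, $R(\cdot):=q(\cdot)/\theta$, $W:=\omega^{\theta}$, $\beta:=\theta\alpha>0$; then $\tfrac{1}{P(\cdot)}=\tfrac{1}{R(\cdot)}+\tfrac{\beta}{n}$, and by Lemma \ref{potencia s} the hypothesis becomes $\|\chi_Q\|_{L^{R(\cdot)}_{W}}\approx|Q|^{-\beta/n}\|\chi_Q\|_{L^{P(\cdot)}_{W}}$ for every cube $Q$. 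So, writing $F:=\sum_{j}\sigma_j\chi_{Q_j}$, it suffices to prove
\[
\Bigl\|\sum_{j}\sigma_j\,|Q_j|^{\beta/n}\chi_{Q_j}\Bigr\|_{L^{R(\cdot)}_{W}}\lesssim\|F\|_{L^{P(\cdot)}_{W}}.
\]

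\textbf{Step 2 (pointwise domination).} If $0<\beta<n$, then for $x\in Q_j$ one has $\int_{Q_j}|x-y|^{\beta-n}\,dy\gtrsim|Q_j|^{\beta/n}$, so, summing in $j$,
\[
\sum_{j}\sigma_j\,|Q_j|^{\beta/n}\chi_{Q_j}(x)\lesssim\int_{\mathbb{R}^{n}}\frac{F(y)}{|x-y|^{n-\beta}}\,dy=:I_{\beta}F(x).
\]
If instead $\beta\ge n$ (which can occur only for large $\theta$), fix $x$, let $\ell^{\ast}:=2\max\{\ell(Q_j):x\in Q_j,\ \sigma_j>0\}$, and let $Q^{\ast}$ be the cube centered at $x$ of side length $\ell^{\ast}$. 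Every $Q_j$ containing $x$ lies in $Q^{\ast}$ and satisfies $|Q_j|^{\beta/n}=|Q_j|\,|Q_j|^{(\beta-n)/n}\le|Q_j|\,|Q^{\ast}|^{(\beta-n)/n}$; hence
\[
\sum_{j}\sigma_j\,|Q_j|^{\beta/n}\chi_{Q_j}(x)\le|Q^{\ast}|^{(\beta-n)/n}\sum_{j}\sigma_j\,|Q_j|\,\chi_{Q_j}(x)\le|Q^{\ast}|^{(\beta-n)/n}\int_{Q^{\ast}}F(y)\,dy\le\mathfrak{M}_{\beta}F(x),
\]
where $\mathfrak{M}_{\beta}h(x):=\sup_{Q\ni x}|Q|^{\beta/n-1}\int_{Q}|h(y)|\,dy$. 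In either regime the overlap of the cubes has been linearized into a single sublinear operator acting on $F$.

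\textbf{Step 3 (the analytic core, and the main obstacle).} It remains to prove the boundedness $I_{\beta}\colon L^{P(\cdot)}_{W}\to L^{R(\cdot)}_{W}$ for $0<\beta<n$, respectively $\mathfrak{M}_{\beta}\colon L^{P(\cdot)}_{W}\to L^{R(\cdot)}_{W}$ for $\beta\ge n$, under $\tfrac{1}{P(\cdot)}=\tfrac{1}{R(\cdot)}+\tfrac{\beta}{n}$ and $\|\chi_Q\|_{L^{R(\cdot)}_{W}}\approx|Q|^{-\beta/n}\|\chi_Q\|_{L^{P(\cdot)}_{W}}$; this is the genuine heart of the matter. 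One proceeds by a Hedberg-type pointwise estimate, bounding $I_{\beta}F(x)$ (resp.\ $\mathfrak{M}_{\beta}F(x)$) by a power of $MF(x)$ times a power of $\|F\|_{L^{P(\cdot)}_{W}}$, and then integrating by means of the boundedness of $M$ on the rescaled weighted variable Lebesgue spaces guaranteed by $\omega\in\mathcal{W}_{q(\cdot)}$. The delicate point is tracking the weight $W=\omega^{\theta}$ through the fractional scaling: it is precisely the size equivalence $\|\chi_Q\|_{L^{R(\cdot)}_{W}}\approx|Q|^{-\beta/n}\|\chi_Q\|_{L^{P(\cdot)}_{W}}$ — equivalently the running hypothesis on $\|\chi_Q\|$ — that makes the powers of $W$ cancel, and the estimate genuinely fails without it. Equivalently, this $L^{P(\cdot)}_{W}\to L^{R(\cdot)}_{W}$ bound is a rescaled instance of the weighted variable Sobolev inequality for fractional integrals, which one may also invoke directly.
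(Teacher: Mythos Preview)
The paper does not supply a proof of this proposition; it simply quotes \cite[Corollary~4.2]{Rocha4}. So there is no in-paper argument to compare against, and your task was really to produce a self-contained proof. Your outline has the right opening move (Step~1 is correct), but there are two genuine gaps in Steps~2--3.

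\textbf{The case $\beta\ge n$ is not handled.} For $\beta>n$ your operator $\mathfrak{M}_\beta h(x)=\sup_{Q\ni x}|Q|^{\beta/n-1}\int_Q|h|$ is identically $+\infty$ on any nonzero $h\ge 0$: just let the cube $Q$ grow, so that $|Q|^{\beta/n-1}\to\infty$ while $\int_Q|h|\to\|h\|_1>0$. Thus the pointwise bound you wrote is vacuous, and the claimed boundedness $\mathfrak{M}_\beta:L^{P(\cdot)}_W\to L^{R(\cdot)}_W$ is false. The regime $\beta\ge n$ corresponds exactly to $\theta\ge n/\alpha$, and since the proposition is stated for \emph{every} $\theta\in(0,\infty)$, this is a real hole, not a technicality.

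\textbf{Step 3 is asserted, not proved, and does not cover all $\theta$.} Even for $0<\beta<n$ you reduce to the weighted variable Sobolev bound $I_\beta:L^{P(\cdot)}_W\to L^{R(\cdot)}_W$ with $P(\cdot)=p(\cdot)/\theta$ and $W=\omega^\theta$, then wave at a Hedberg argument. Two problems. First, a Hedberg estimate needs $M$ bounded on $L^{P(\cdot)}_W$, which at the very least forces $P_->1$, i.e.\ $\theta<p_-$; for $\theta\ge p_-$ the source space is not even Banach and the argument collapses. Second, the hypothesis you actually have is $\omega\in\mathcal{W}_{q(\cdot)}$, a \emph{dual}-type condition on $M$ (Definition~\ref{pesos Wp}(ii)); you never check that after the $\theta$-rescaling this yields the boundedness of $M$ on the primal space $L^{P(\cdot)}_W$ needed for Hedberg, nor that $W\in\mathcal{W}_{R(\cdot)}$ so that one could quote a Sobolev theorem off the shelf. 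In short, for small $\theta$ your reduction to a weighted Sobolev inequality is plausibly the intended route in \cite{Rocha4} (the statement there is a \emph{corollary}, presumably of exactly such an inequality), but you have not verified the hypotheses survive rescaling; and for large $\theta$ your argument breaks outright.
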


\section{Molecular reconstruction for $H^{p(\cdot)}_{\omega}(\mathbb{R}^{n})$}

Our definition of $\omega$ - molecule is a slight modification from that given in \cite{Nakai} by Nakai and Sawano for variable Hardy spaces.
Namely, we replace the amount $\| \chi _{Q} \|_{L^{p(\cdot)}}$ by $\| \chi _{Q} \|_{L^{p(\cdot)}_{\omega}}$.

\begin{definition} \label{def molecule}
Let $p(\cdot):\mathbb{R}^{n} \to ( 0,\infty)$ be a measurable function with $0 < p_{-} \leq p_{+} < \infty $ and 
let $\omega \in \mathcal{W}_{p(\cdot)}$. Let $p_{0} > 1$ and $d_{p(\cdot)} := \lfloor n s_{\omega, p(\cdot)} -n \rfloor$. 
A function $m(\cdot)$ on $\mathbb{R}^{n}$ is called a 
$\omega-(p(\cdot), p_{0}, d_{p(\cdot)})$ molecule centered at a cube $Q = Q(z,r)$ (with side length $\ell(Q)=r$) if 

$m_{1})$ $\| m \|_{L^{p_{0}}(2\sqrt{n}Q)}\leq \displaystyle{\frac{| Q |^{\frac{1}{p_{0}}}}{\| \chi _{Q} \|_{L^{p(\cdot)}_{\omega}}}}$,

$m_{2})$ $|m(x)| \leq \| \chi _{Q} \|_{L^{p(\cdot)}_{\omega}}^{-1} 
\left( \displaystyle{ 1 + \frac{|x-z|}{\ell(Q)} } \right)^{-2n-2d_{p(\cdot)}-3}$ for all 
$x \in \mathbb{R}^{n} \setminus Q(z,2\sqrt{n}r)$,

$m_{3})$ $\displaystyle{\int} x^{\beta}  m(x) \, dx = 0$ for all multi-index $\beta$ with $| \beta | \leq d_{p(\cdot)}$.
\end{definition}

\begin{remark} \label{molecule Lp}
The conditions $m_1)$ and $m_2)$ imply that $\| m \|_{L^{p_0}(\mathbb{R}^{n})} \leq C 
\displaystyle{\frac{|Q|^{\frac{1}{p_0}}}{ \| \chi _{Q} \|_{L^{p(\cdot)}_{\omega}}}}$, where $C$ is a positive constant independent of the molecule $m$.
\end{remark}

Next, we give a molecular reconstruction theorem for $H^{p(\cdot)}_{\omega}(\mathbb{R}^{n})$. For them, we need to introduce the following discrete maximal: given $\phi \in \mathcal{S}(\mathbb{R}^{n})$ and $f \in \mathcal{S}'(\mathbb{R}^{n})$, we define
$$M^{dis}_{\phi}f(x) = \sup_{j \in \mathbb{Z}} \left| (\phi^{j} \ast f) (x) \right|,$$
where $\phi^{j}(x) = 2^{jn} \phi(2^{j}x).$ From \cite[Lemma 3.2 and Proof of Theorem 3.3]{Nakai}, it follows that for all 
$f \in  \mathcal{S}'(\mathbb{R}^{n})$ and all $0 < \theta < 1$
\begin{equation} \label{maximal discreta}
\mathcal{M}_N f(x) \leq C \left[ M \left(\left(M^{dis}_{\phi}f \right)^{\theta}\right)(x)\right]^{\frac{1}{\theta}}, \,\,\, for \,\, all \,\, x \in \mathbb{R}^{n}, 
\end{equation}
if $N$ is sufficiently large. This inequality gives the following result.

\begin{lemma} \label{Hpw norm discreta} Let $\phi \in \mathcal{S}(\mathbb{R}^{n})$ and $f \in \mathcal{S}'(\mathbb{R}^{n})$.
If $\omega \in \mathcal{W}_{p(\cdot)}$, then $\| f \|_{H^{p(\cdot)}_{\omega}} \leq C \| M^{dis}_{\phi}f \|_{L^{p(\cdot)}_{\omega}}$,
where $C$ is a positive constant which does not depend on $f$.
\end{lemma}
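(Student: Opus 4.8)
The plan is to obtain the estimate directly from the pointwise domination~(\ref{maximal discreta}), by taking $L^{p(\cdot)}_{\omega}$-norms on both sides and absorbing the composition $M(\,\cdot\,)^{1/\theta}$ via the boundedness of $M$ on a rescaled weighted variable Lebesgue space together with Lemma~\ref{potencia s}. First I would note that, since $\omega\in\mathcal{W}_{p(\cdot)}$, the space $H^{p(\cdot)}_{\omega}$ and (up to equivalence) its quasi-norm are unchanged if the grand maximal function is defined with any sufficiently large $N$ (this is part of the equivalence of maximal characterizations in \cite{Ho1}); hence I may fix $N$ as large as required for~(\ref{maximal discreta}) to hold. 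If $\|M^{dis}_{\phi}f\|_{L^{p(\cdot)}_{\omega}}=\infty$ there is nothing to prove, so I assume this quantity is finite.

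The key preliminary step is to choose an exponent $0<\theta<1$, in fact $\theta<\min\{1,p_{-}\}$, for which $M$ is bounded on $L^{p(\cdot)/\theta}_{\omega^{\theta}}$. Such a $\theta$ exists precisely because $\omega\in\mathcal{W}_{p(\cdot)}$: condition~(ii) of Definition~\ref{pesos Wp} supplies $s>1$ and $\kappa>1$ with $M$ bounded on $L^{(sp(\cdot))'/\kappa}_{\omega^{-\kappa/s}}$, and by the duality and self-improvement properties of the boundedness of $M$ on weighted variable Lebesgue spaces (the variable-exponent analogue of $w\in A_{r}\Leftrightarrow w^{1-r'}\in A_{r'}$), this transfers to boundedness of $M$ on $L^{sp(\cdot)}_{\omega^{1/s}}=L^{p(\cdot)/\theta}_{\omega^{\theta}}$ with $\theta=1/s\in(0,1)$; this is the same manipulation that underlies the atomic decomposition of \cite{Ho1} and is recorded in \cite{Rocha4}. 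I would use this $\theta$, which automatically lies in the admissible range $0<\theta<1$ of~(\ref{maximal discreta}).

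With $\theta$ fixed, set $g:=M^{dis}_{\phi}f$. Since $0\le \mathcal{M}_{N}f\le C\,[M(g^{\theta})]^{1/\theta}$ pointwise by~(\ref{maximal discreta}) and $\|\cdot\|_{L^{p(\cdot)}_{\omega}}$ is monotone, $\|f\|_{H^{p(\cdot)}_{\omega}}=\|\mathcal{M}_{N}f\|_{L^{p(\cdot)}_{\omega}}\le C\,\big\|\,[M(g^{\theta})]^{1/\theta}\,\big\|_{L^{p(\cdot)}_{\omega}}$. Applying Lemma~\ref{potencia s} with $s=\theta$ gives $\big\|\,[M(g^{\theta})]^{1/\theta}\,\big\|_{L^{p(\cdot)}_{\omega}}^{\theta}=\|M(g^{\theta})\|_{L^{p(\cdot)/\theta}_{\omega^{\theta}}}$; the boundedness of $M$ on $L^{p(\cdot)/\theta}_{\omega^{\theta}}$ gives $\|M(g^{\theta})\|_{L^{p(\cdot)/\theta}_{\omega^{\theta}}}\le C\,\|g^{\theta}\|_{L^{p(\cdot)/\theta}_{\omega^{\theta}}}$; and Lemma~\ref{potencia s} again (in the reverse direction, with $s=\theta$) gives $\|g^{\theta}\|_{L^{p(\cdot)/\theta}_{\omega^{\theta}}}=\|g\|_{L^{p(\cdot)}_{\omega}}^{\theta}$. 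Chaining these and taking $\theta$-th roots yields $\|f\|_{H^{p(\cdot)}_{\omega}}\le C\,\|g\|_{L^{p(\cdot)}_{\omega}}=C\,\|M^{dis}_{\phi}f\|_{L^{p(\cdot)}_{\omega}}$, as desired.

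The only genuinely non-formal point is the selection of $\theta$ in the second paragraph: one must know that membership $\omega\in\mathcal{W}_{p(\cdot)}$ upgrades to boundedness of $M$ on some $L^{p(\cdot)/\theta}_{\omega^{\theta}}$ with $\theta<1$. I expect this to be the main obstacle to present cleanly, although it is by now standard in this circle of ideas; the remaining ingredients (the $N$-independence of $H^{p(\cdot)}_{\omega}$, the monotonicity of the quasi-norm, and the two applications of Lemma~\ref{potencia s}) are routine.
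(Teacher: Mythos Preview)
Your proposal is correct and follows essentially the same route as the paper: apply the pointwise estimate~(\ref{maximal discreta}), use Lemma~\ref{potencia s} to rescale, invoke boundedness of $M$ on $L^{p(\cdot)/\theta}_{\omega^{\theta}}$ for a suitable $0<\theta<1$, and undo the rescaling. The only cosmetic difference is in how you justify the choice of $\theta$: the paper simply takes $\tfrac{1}{\theta}>s_{\omega,p(\cdot)}$ and cites \cite[Theorem~3.1]{Ho1} directly, which packages your duality/self-improvement discussion into a single reference and avoids having to argue the transfer from $L^{(sp(\cdot))'/\kappa}_{\omega^{-\kappa/s}}$ to $L^{sp(\cdot)}_{\omega^{1/s}}$ by hand.
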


\begin{proof} From (\ref{maximal discreta}) above and Lemma \ref{potencia s}, we have
\[
\| f \|_{H^{p(\cdot)}_{\omega}} = \| \mathcal{M}_N f \|_{L^{p(\cdot)}_{\omega}} \lesssim 
\left\| \left[ M \left(\left(M^{dis}_{\phi}f \right)^{\theta}\right)(\cdot)\right]^{\frac{1}{\theta}} \right\|_{L^{p(\cdot)}_{\omega}}
= \left\| M \left(\left(M^{dis}_{\phi}f \right)^{\theta}\right) \right\|_{L^{p(\cdot)/\theta}_{\omega^{\theta}}}^{1/\theta}.
\] 
By taking $\frac{1}{\theta} > s_{\omega, p(\cdot)}$, by \cite[Theorem 3.1]{Ho1} and Lemma \ref{potencia s}, we get
\[
\| f \|_{H^{p(\cdot)}_{\omega}} \lesssim
\left\| M \left(\left(M^{dis}_{\phi}f \right)^{\theta}\right) \right\|_{L^{p(\cdot)/\theta}_{\omega^{\theta}}}^{1/\theta}
\lesssim \left\| \left(M^{dis}_{\phi}f \right)^{\theta} \right\|_{L^{p(\cdot)/\theta}_{\omega^{\theta}}}^{1/\theta}
= \left\| M^{dis}_{\phi}f \right\|_{L^{p(\cdot)}_{\omega}}.
\]
Consequently, we obtain the desired result.
\end{proof}

\begin{theorem} \label{molecular reconst}
Let $p(\cdot) : \mathbb{R}^{n} \to (0, \infty)$ be a measurable function with $0 < p_{-} \leq p_{+} < \infty$, and  
$\omega \in \mathcal{W}_{p(\cdot)}$. Suppose that $0 < \theta < 1$ satisfies $\frac{1}{\theta} \in \mathbb{S}_{\omega, \, p(\cdot)}$. 
Let $\{ \lambda_j \}_{j=1}^{\infty}$ be a sequence of scalars and let $\{ Q_j \}_{j=1}^{\infty}$ be a family of cubes such that
\[
\mathcal{A} \left( \{ \lambda_j \}_{j=1}^{\infty}, \{ Q_j \}_{j=1}^{\infty}, p(\cdot), \omega, \theta \right) :=
\left\| \sum_{j=1}^{\infty} \left( \frac{|\lambda_j |}{\| \chi_{Q_j} \|_{L^{p(\cdot)}_{\omega}}} \right)^{\theta} \chi_{Q_j} \right\|_{L^{p(\cdot)/\theta}_{\omega^{\theta}}}^{1/\theta} < \infty.
\]
If $\{ m_j \}_{j=1}^{\infty}$ is a sequence of $\omega - (p(\cdot), p_0, \lfloor n s_{\omega, p(\cdot)} - n \rfloor)$ molecules,
with $p_0 > \theta \left( \kappa^{1/\theta}_{\omega, p(\cdot)} \right)'$, such that $m_j$ is centered at $Q_j$ for every $j \in \mathbb{N}$ and the series
\[
g := \sum_{j=1}^{\infty} \lambda_j m_j
\]
converges in $\mathcal{S}'(\mathbb{R}^{n})$, then $g \in H^{p(\cdot)}_{\omega}(\mathbb{R}^{n})$ with
\begin{equation} \label{Hpw norm molecular}
\| g \|_{H^{p(\cdot)}_{\omega}} \lesssim
\mathcal{A} \left( \{ \lambda_j \}_{j=1}^{\infty}, \{ Q_j \}_{j=1}^{\infty}, p(\cdot), \omega, \theta \right), 
\end{equation}
where the implicit constant in (\ref{Hpw norm molecular}) does not depend on $g$, $\{ \lambda_j \}_{j=1}^{\infty}$, and
$\{ Q_j \}_{j=1}^{\infty}$.
\end{theorem}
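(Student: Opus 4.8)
The plan is to bound $\|g\|_{H^{p(\cdot)}_\omega}$ by controlling the discrete maximal $M^{dis}_\phi g$ via Lemma \ref{Hpw norm discreta}, and then to dominate that maximal by a sum of two pieces for each molecule $m_j$: a "local" contribution near $2\sqrt{n}Q_j$ and a "tail" contribution away from it. First I would fix $\phi \in \mathcal{S}(\mathbb{R}^n)$ (any fixed test function works in Lemma \ref{Hpw norm discreta}) and estimate $M^{dis}_\phi(\lambda_j m_j)(x)$ in two regimes. For $x \in 2\sqrt{n}Q_j$, I would use the $L^{p_0}$-size bound from Remark \ref{molecule Lp} together with the pointwise estimate $M^{dis}_\phi h(x) \lesssim [M(|h|^{r})(x)]^{1/r}$ valid for any $r>0$, choosing an exponent $r<1$; combined with the known boundedness of $M$ on $L^{p_0/r}$ and the size condition $m_1)$, this yields $M^{dis}_\phi(\lambda_j m_j)(x) \lesssim \frac{|\lambda_j|}{\|\chi_{Q_j}\|_{L^{p(\cdot)}_\omega}}\,[M(\chi_{Q_j})(x)]^{1/r}$ up to passing through $2\sqrt n Q_j$. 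For $x \notin 2\sqrt{n}Q_j$, I would use the vanishing moments $m_3)$ and the decay $m_2)$ in the standard way (Taylor-expand $\phi^{j'}$ against $m_j$, exploiting $\lfloor n s_{\omega,p(\cdot)}-n\rfloor$ vanishing moments) to obtain a pointwise bound of the form $M^{dis}_\phi(\lambda_j m_j)(x) \lesssim \frac{|\lambda_j|}{\|\chi_{Q_j}\|_{L^{p(\cdot)}_\omega}}\left(1+\frac{|x-z_j|}{\ell(Q_j)}\right)^{-n-\varepsilon}$ for some $\varepsilon>0$; by a routine computation this tail is again $\lesssim \frac{|\lambda_j|}{\|\chi_{Q_j}\|_{L^{p(\cdot)}_\omega}}[M(\chi_{Q_j})(x)]^{1/r}$ for $r<1$ close enough to $1$ (the exponent $-2n-2d_{p(\cdot)}-3$ in $m_2)$ is generous enough to absorb everything).

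Next, summing over $j$, I would raise to the power $\theta$ and use the elementary inequality $\big(\sum_j a_j\big)^\theta \le \sum_j a_j^\theta$ for $0<\theta<1$ to get
\[
\left(M^{dis}_\phi g(x)\right)^{\theta} \lesssim \sum_{j=1}^{\infty}\left(\frac{|\lambda_j|}{\|\chi_{Q_j}\|_{L^{p(\cdot)}_\omega}}\right)^{\theta}\left[M(\chi_{Q_j})(x)\right]^{\theta/r}.
\]
Then I would take $L^{p(\cdot)/\theta}_{\omega^\theta}$ quasi-norms of both sides. After an application of Lemma \ref{potencia s} this reduces to estimating $\big\|\sum_j (|\lambda_j|/\|\chi_{Q_j}\|_{L^{p(\cdot)}_\omega})^\theta [M(\chi_{Q_j})]^{\theta/r}\big\|_{L^{p(\cdot)/\theta}_{\omega^\theta}}$. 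Here is where the hypothesis $\frac1\theta \in \mathbb{S}_{\omega,p(\cdot)}$ and the constraint $p_0 > \theta(\kappa^{1/\theta}_{\omega,p(\cdot)})'$ enter: they guarantee, via a vector-valued / Fefferman--Stein type maximal inequality on the weighted variable space (the relevant inequality is exactly what the indices $s_{\omega,p(\cdot)}$ and $\kappa^{s}_{\omega,p(\cdot)}$ were designed to encode, cf. \cite[Theorems 5.3 and 6.3]{Ho1}), that replacing each $[M(\chi_{Q_j})]^{\theta/r}$ by $\chi_{Q_j}$ costs only a constant:
\[
\left\|\sum_{j}\left(\tfrac{|\lambda_j|}{\|\chi_{Q_j}\|_{L^{p(\cdot)}_\omega}}\right)^{\theta}\left[M(\chi_{Q_j})\right]^{\theta/r}\right\|_{L^{p(\cdot)/\theta}_{\omega^\theta}} \lesssim \left\|\sum_{j}\left(\tfrac{|\lambda_j|}{\|\chi_{Q_j}\|_{L^{p(\cdot)}_\omega}}\right)^{\theta}\chi_{Q_j}\right\|_{L^{p(\cdot)/\theta}_{\omega^\theta}}.
\]
The right-hand side is precisely $\mathcal{A}(\{\lambda_j\},\{Q_j\},p(\cdot),\omega,\theta)^{\theta}$. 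Taking the $1/\theta$ power and chaining through Lemma \ref{Hpw norm discreta} gives \eqref{Hpw norm molecular}, and in particular $\|g\|_{H^{p(\cdot)}_\omega}<\infty$, so $g \in H^{p(\cdot)}_\omega(\mathbb{R}^n)$.

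The main obstacle, and the step requiring the most care, is the weighted vector-valued maximal estimate in the last display: one must verify that the specific numerology $p_0 > \theta(\kappa^{1/\theta}_{\omega,p(\cdot)})'$ together with $\tfrac1\theta \in \mathbb{S}_{\omega,p(\cdot)}$ is exactly what makes $M$ bounded on the dual-type space $L^{(p(\cdot)/\theta)'}_{(\omega^\theta)^{-1}}$ (or the appropriate power thereof), which is what lets one "undo" the maximal function applied to the $\chi_{Q_j}$'s; the exponent $\theta/r$ rather than $1$ on the maximal functions also has to be accommodated by choosing $r<1$ with $\theta/r$ still large enough, which forces a compatibility check between $r$, $\theta$, $p_0$ and $\kappa^{1/\theta}_{\omega,p(\cdot)}$. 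The tail estimate away from $2\sqrt n Q_j$ is standard but needs the molecule's decay exponent in $m_2)$ to be at least $n+d_{p(\cdot)}+1$ more than $n+d_{p(\cdot)}$, which is why the definition uses $-2n-2d_{p(\cdot)}-3$; everything else (the local $L^{p_0}$ estimate, the use of $\big(\sum a_j\big)^\theta\le\sum a_j^\theta$, and the bookkeeping with Lemma \ref{potencia s} and Lemma \ref{2Q}) is routine.
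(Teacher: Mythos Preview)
Your overall architecture --- reduce to $M^{dis}_\phi g$ via Lemma~\ref{Hpw norm discreta}, split each term into a local piece on $2\sqrt{n}Q_j$ and a tail, and finish with a Fefferman--Stein type inequality --- matches the paper, and your treatment of the tail is essentially the paper's $J_2$ estimate (the paper writes the decay as $[M(\chi_{Q_j})]^{(n+d_{p(\cdot)}+1)/n}$ and then uses \cite[Theorem~3.1]{Ho1} with the fact that $(n+d_{p(\cdot)}+1)/n>s_{\omega,p(\cdot)}$).

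The gap is in your local estimate. You claim that for $x\in 2\sqrt{n}Q_j$ one has the \emph{pointwise} bound
\[
M^{dis}_\phi(m_j)(x)\ \lesssim\ \|\chi_{Q_j}\|_{L^{p(\cdot)}_\omega}^{-1}\,[M(\chi_{Q_j})(x)]^{1/r},
\]
obtained by combining $M^{dis}_\phi h\lesssim [M(|h|^r)]^{1/r}$ with ``boundedness of $M$ on $L^{p_0/r}$'' and the size condition $m_1)$. This cannot work: the molecule carries only an $L^{p_0}$ bound on $2\sqrt{n}Q_j$, no pointwise bound whatsoever, so $[M(|m_j|^r)(x)]^{1/r}$ need not be dominated by any fixed multiple of $\|\chi_{Q_j}\|_{L^{p(\cdot)}_\omega}^{-1}$ at a given point. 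Boundedness of $M$ on $L^{p_0/r}$ is a norm statement and does not produce a pointwise inequality. (For $L^\infty$-atoms the analogous step is legitimate precisely because the atom itself is pointwise bounded; molecules lack this.)

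What the paper does instead is to keep the local term as $\chi_{2\sqrt{n}Q_j}\,M(m_j)$, apply the $\theta$-inequality, and then invoke \cite[Lemma~5.4]{Ho1} with $b_j=(\chi_{2\sqrt{n}Q_j}M(m_j))^{\theta}$ and $A_j=\|\chi_{2\sqrt{n}Q_j}\|_{L^{p(\cdot)/\theta}_{\omega^\theta}}^{-1}$; the input of that lemma is exactly an $L^{p_0/\theta}$ size control on $b_j$ supported in $2\sqrt{n}Q_j$, supplied here by Remark~\ref{molecule Lp} and the $L^{p_0}$-boundedness of $M$. This lemma is precisely the place where the hypothesis $p_0>\theta\,(\kappa^{1/\theta}_{\omega,p(\cdot)})'$ is consumed --- not in the vector-valued maximal step as you suggest. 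After Lemma~5.4 one has the $\chi_{2\sqrt{n}Q_j}$'s in place of the $b_j$'s, and only then does the paper pass from $\chi_{2\sqrt{n}Q_j}$ to $\chi_{Q_j}$ via $\chi_{2\sqrt{n}Q_j}\le M(\chi_{Q_j})^{2}$ and \cite[Theorem~3.1]{Ho1}. So the fix is not a compatibility check on exponents but a genuinely different mechanism for the local part: an $L^{p_0}$-to-$L^{p(\cdot)/\theta}_{\omega^\theta}$ embedding for functions supported on cubes, rather than a pointwise maximal bound.
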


\begin{proof}
Let $\phi \in C_{c}^{\infty}(\mathbb{R}^{n})$ such that $\chi_{B(0,1)} \leq \phi  \leq \chi_{B(0,2)}$, we set 
$\phi_{2^{k}}(x) = 2^{kn}\phi(2^{k}x)$ where $k \in \mathbb{Z}$.  Since 
$g = \sum_j \lambda_j m_j$ in $\mathcal{S}'(\mathbb{R}^{n})$, it follows that
\[
|(\phi_{2^{k}} \ast g)(x) | \leq \sum_j |\lambda_j| |(\phi_{2^{k}} \ast m_j)(x)|, \,\,\, \text{for all} \,\, x \in \mathbb{R}^{n}, \,\,
\text{and all} \,\, k \in \mathbb{Z}.
\]
We observe that the argument used in \cite[Proof of Theorem 5.2]{Nakai} to obtain the pointwise inequality $(5.2)$ works in this setting, but considering now the conditions $m_1)$, $m_2)$ and $m_3)$ in Definition \ref{def molecule}. Therefore we get
\begin{equation} \label{ineq discrete}
M_{\phi}^{dis}g(x) \lesssim \sum_{j} |\lambda_{j}| \chi_{2\sqrt{n}Q_j}(x) M(m_j)(x) +   
\sum_{j} |\lambda_{j}| \frac{ \left[ M(\chi_{Q_j})(x) \right]^{\frac{n + d_{p(\cdot)} +1}{n}}}{\| \chi_{Q_j} \|_{L^{p(\cdot)}_{\omega}} }, 
\,\,\,\, (x \in \mathbb{R}^{n}),
\end{equation}
where $M$ is the Hardy-Littlewood maximal operator and $d_{p(\cdot)} = \lfloor n s_{\omega, p(\cdot)} - n \rfloor$.

Then, from (\ref{ineq discrete}) above and Lemma \ref{Hpw norm discreta}, we have
\[
\| g \|_{H^{p(\cdot)}_\omega} \lesssim \left\| M_{\phi}^{dis}g  \right\|_{L^{p(\cdot)}_\omega}
\]
\[
\lesssim \left\| \sum_j |\lambda_j| \chi_{2\sqrt{n}Q_j}  \cdot M(m_j)  \right\|_{L^{p(\cdot)}_\omega} +
\left\| \sum_j |\lambda_{j}| \frac{\left[ M(\chi_{Q_j})(\cdot) \right]^{\frac{n + d_{p(\cdot)} +1}{n}}}{\|\chi_{Q_j}\|_{L^{p(\cdot)}_{\omega}}}  \right\|_{L^{p(\cdot)}_\omega} = J_1 + J_2.
\]

Now, we consider $J_1$. The boundedness of the Hardy-Littlewood maximal operator $M$ on $L^{p_0}$, Remark \ref{molecule Lp} and 
Lemma \ref{2Q} yield
\begin{equation} \label{Mmj}
\left\| [M(m_j)]^{\theta} \right\|_{L^{p_{0}/\theta}(2\sqrt{n}Q_{j})} = 
\left\| M(m_j) \right\|_{L^{p_{0}}(2\sqrt{n}Q_{j})}^{\theta} \lesssim
\left\| m_j \right\|_{L^{p_{0}}(\mathbb{R}^{n})}^{\theta} 
\end{equation}
\[
\lesssim 
\frac{| Q_j |^{\frac{\theta}{p_{0}}}}{\left\| \chi _{Q_j }\right\|_{L^{p(\cdot)/\theta}_{\omega^{\theta}}}}
\lesssim 
\frac{| 2\sqrt{n} Q_j |^{\frac{\theta}{p_{0}}}}{\left\| \chi _{ 2\sqrt{n}Q_j }\right\|_{L^{p(\cdot)/\theta}_{\omega^{\theta}}}},
\]
where $0 < \theta < 1$ and $\frac{1}{\theta} \in \mathbb{S}_{\omega, p(\cdot)}$, and 
$p_0 > \theta \left( \kappa^{1/\theta}_{\omega, p(\cdot)} \right)'$. By applying the $\theta$-inequality and \cite[Lemma 5.4]{Ho1} with 
$b_j = \left( \chi_{2\sqrt{n} Q_j} \cdot M(m_j) \right)^{\theta}$, (\ref{Mmj}) and 
$A_j = \left\| \chi_{2\sqrt{n} Q_{j}} \right\|_{L^{p(\cdot)/\theta}_{\omega^{\theta}}}^{-1}$, we get
\[
J_1 \lesssim \left\| \sum_{j} \left(|\lambda_j| \chi_{2\sqrt{n}Q_j}  \cdot M(m_j) \right)^{\theta} \right\|^{1/\theta}_{L^{p(\cdot)/\theta}_{\omega^{\theta}}} \lesssim \left\| \sum_{j} \left( \frac{|\lambda_j|}{\left\| \chi_{2\sqrt{n}Q_{j}} \right\|_{L^{p(\cdot)}_{\omega}}} 
\right)^{\theta} \chi_{2\sqrt{n} Q_j} \right\|^{1/\theta}_{L^{p(\cdot)/\theta}_{\omega^{\theta}}}.
\]
Being $\chi_{2\sqrt{n} Q_j} \leq M(\chi_{Q_j})^{2}$, by Lemma \ref{2Q} and \cite[Theorem 3.1]{Ho1}, we have
\[
J_1 \lesssim \left\| \left\{ \sum_{j} \left( \frac{|\lambda_j|^{\theta/2}}{\left\| \chi_{Q_{j}} \right\|^{\theta/2}_{L^{p(\cdot)}_{\omega}}}M(\chi_{Q_j})\right)^{2}  \right\}^{1/2} \right\|^{2/\theta}_{L^{2p(\cdot)/\theta}_{\omega^{\theta/2}}} \lesssim
\left\| \sum_{j} \left( \frac{|\lambda_j |}{\| \chi_{Q_j} \|_{L^{p(\cdot)}_{\omega}}} \right)^{\theta} \chi_{Q_j} 
\right\|_{L^{p(\cdot)/\theta}_{\omega^{\theta}}}^{1/\theta}.
\]
So,
\begin{equation} \label{J1}
J_1 \lesssim 
\mathcal{A} \left( \{ \lambda_j \}_{j=1}^{\infty}, \{ Q_j \}_{j=1}^{\infty}, p(\cdot), \omega, \theta \right) < \infty.
\end{equation}

To estimate $J_2$, we write $r = \displaystyle{\frac{n + d_{p(\cdot)} +1}{n}}$. Thus
\[
J_2 \lesssim 
\left\| \left( \sum_j |\lambda_{j}| \frac{\left[ M(\chi_{Q_j})(\cdot) \right]^{r}}{\|\chi_{Q_j}\|_{L^{p(\cdot)}_{\omega}}} \right)^{1/r}
\right\|_{L^{rp(\cdot)}_{\omega^{1/r}}}^{r}.
\]
Since 
\[
r = \frac{n + d_{p(\cdot)} +1}{n} = \frac{n + \lfloor n s_{\omega, p(\cdot)} - n \rfloor + 1}{n} > s_{\omega, p(\cdot)},
\] 
to apply again \cite[Theorem 3.1]{Ho1} followed by the $\theta$-inequality we obtain
\[
J_2 \lesssim 
\left\| \sum_j \left( \frac{|\lambda_{j}|}{\|\chi_{Q_j}\|_{L^{p(\cdot)}_{\omega}}} \chi_{Q_j} \right)^{1/r}
\right\|_{L^{rp(\cdot)}_{\omega^{1/r}}}^{r} = 
\left\| \sum_j  \frac{|\lambda_{j}|}{\|\chi_{Q_j}\|_{L^{p(\cdot)}_{\omega}}} \chi_{Q_j}
\right\|_{L^{p(\cdot)}_{\omega}} 
\]
\[
\lesssim
\left\| \sum_j  \left( \frac{|\lambda_{j}|}{\|\chi_{Q_j}\|_{L^{p(\cdot)}_{\omega}}} \right)^{\theta} \chi_{Q_j}
\right\|_{L^{p(\cdot)(\theta}_{\omega^{\theta}}}^{1/\theta}.
\]
Hence, 
\begin{equation} \label{J2}
J_2 \lesssim 
\mathcal{A} \left( \{ \lambda_j \}_{j=1}^{\infty}, \{ Q_j \}_{j=1}^{\infty}, p(\cdot), \omega, \theta \right) < \infty.
\end{equation}
Finally, (\ref{J1}) and (\ref{J2}) give (\ref{Hpw norm molecular}) and, with that, $g \in H^{p(\cdot)}_{\omega}(\mathbb{R}^{n})$.
\end{proof}

\section{Weighted variable estimates for singular integrals}

Let $\Omega \in C^{\infty}(S^{n-1})$ with $\int_{S^{n-1}} \Omega(u) d\sigma(u)=0$. We define the operator $T$ by
\begin{equation} \label{sing op}
Tf(x) = \lim_{\epsilon \rightarrow 0^{+}} \int_{|y| > \epsilon} \frac{\Omega(y/|y|)}{|y|^{n}} f(x-y) \, dy, \,\,\,\, x \in \mathbb{R}^{n}.
\end{equation}
It is well known that the operator $T$ is bounded on $L^{p_0}(\mathbb{R}^{n})$ for all $1 < p_0 < +\infty$ and of weak-type $(1,1)$ 
(see e.g. \cite{Elias}).

We start studying the behavior of the operator $T$ on atoms. Then, we prove the boundedness of $T$ on $H^{p(\cdot)}_{\omega}$.

\begin{proposition} \label{atoms into molec T}
Let $T$ be the operator given by (\ref{sing op}) and let $p(\cdot) : \mathbb{R}^{n} \to (0, \infty)$ be a measurable function with 
$0 < p_{-} \leq p_{+} < \infty$, and $\omega \in \mathcal{W}_{p(\cdot)}$. Then, for some universal constant $C > 0$, 
$C (Ta)(\cdot)$ is a $\omega - (p(\cdot), p_0, \lfloor n s_{\omega, p(\cdot)} - n \rfloor)$ molecule for each 
$\omega - (p(\cdot), p_0, n + 2 \lfloor n s_{\omega, p(\cdot)} - n \rfloor + 2)$ atom $a(\cdot)$.
\end{proposition}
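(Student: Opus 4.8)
The plan is to verify the three defining conditions $m_1)$, $m_2)$, $m_3)$ of Definition~\ref{def molecule} for the function $C\,(Ta)$, where $a$ is a $\omega-(p(\cdot), p_0, n + 2 d_{p(\cdot)} + 2)$ atom supported on a cube $Q = Q(z, r)$, and $d_{p(\cdot)} = \lfloor n s_{\omega, p(\cdot)} - n \rfloor$. We take $Q$ itself as the center cube for the molecule $Ta$. The size condition $m_1)$ is the easiest: since $T$ is bounded on $L^{p_0}(\mathbb{R}^n)$ for $1 < p_0 < \infty$, we have $\| Ta \|_{L^{p_0}(2\sqrt{n}Q)} \leq \| Ta \|_{L^{p_0}(\mathbb{R}^n)} \lesssim \| a \|_{L^{p_0}(\mathbb{R}^n)} \leq |Q|^{1/p_0} / \| \chi_Q \|_{L^{p(\cdot)}_\omega}$ directly from $a_2)$. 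The vanishing-moment condition $m_3)$ for $Ta$ up to order $d_{p(\cdot)}$ is the classical computation: one pairs $Ta$ against a monomial $x^\beta$ with $|\beta| \leq d_{p(\cdot)}$, uses the fact that $Ta$ decays fast enough at infinity (which follows from the pointwise decay we establish for $m_2)$, since $2n + 2d_{p(\cdot)} + 3 - |\beta| > n$) to justify the integration, and invokes the standard identity $\int (Ta)(x) x^\beta\, dx = c_\beta \int a(x)\, (\text{something involving the multiplier symbol})\, dx = 0$; here one needs $a$ to have vanishing moments up to order $n + 2 d_{p(\cdot)} + 2 \geq d_{p(\cdot)}$, which it does. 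More carefully, the standard argument expresses the moment of $Ta$ against $x^\beta$ in terms of derivatives of the Fourier transform of $Ta$ at the origin, and since $\widehat{Ta}(\xi) = \widehat{K}(\xi)\widehat{a}(\xi)$ with $\widehat{K}$ homogeneous of degree $0$ and smooth away from the origin, while $\widehat a$ vanishes to high order at $0$ by the strong moment condition on $a$, the product still vanishes to order $d_{p(\cdot)}$.

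The genuinely technical step is the pointwise decay estimate $m_2)$: for $x \notin Q(z, 2\sqrt{n}r)$ we must show
\[
|(Ta)(x)| \lesssim \| \chi_Q \|_{L^{p(\cdot)}_\omega}^{-1} \left( 1 + \frac{|x - z|}{\ell(Q)} \right)^{-2n - 2 d_{p(\cdot)} - 3}.
\]
For this I would write, for such $x$ (so $|x - z| \gtrsim r$ and $|y - z| \leq r/2 \cdot \sqrt n \ll |x - z|$ for $y \in Q$),
\[
(Ta)(x) = \int_Q \frac{\Omega((x-y)/|x-y|)}{|x-y|^n} a(y)\, dy,
\]
and exploit the vanishing moments of $a$: subtract from the kernel $y \mapsto \Omega((x-y)/|x-y|)|x-y|^{-n}$ its Taylor polynomial in $y$ around $y = z$ of degree $n + 2 d_{p(\cdot)} + 2$. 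Since $\Omega \in C^\infty(S^{n-1})$, the kernel is smooth in $y$ on the region $|y - z| \leq |x-z|/2$, with derivatives of order $m$ bounded by $C_m |x - z|^{-n - m}$; hence the Taylor remainder of order $M := n + 2 d_{p(\cdot)} + 2$ is $\lesssim |x - z|^{-n - M - 1} |y - z|^{M+1} \lesssim |x-z|^{-n-M-1} r^{M+1}$ on $Q$. Integrating against $|a(y)|$ and using the $L^{p_0}$ bound on $a$ together with Hölder on $Q$ (to control $\int_Q |a| \lesssim |Q|^{1 - 1/p_0} \|a\|_{L^{p_0}} \leq |Q| / \|\chi_Q\|_{L^{p(\cdot)}_\omega}$) gives
\[
|(Ta)(x)| \lesssim \frac{1}{\|\chi_Q\|_{L^{p(\cdot)}_\omega}} \cdot \frac{r^{n + M + 1}}{|x - z|^{n + M + 1}} = \frac{1}{\|\chi_Q\|_{L^{p(\cdot)}_\omega}} \left( \frac{r}{|x-z|} \right)^{2n + 2 d_{p(\cdot)} + 3},
\]
and for $|x - z| \gtrsim r$ this is comparable to the required bound with $\bigl(1 + |x-z|/\ell(Q)\bigr)^{-2n - 2d_{p(\cdot)} - 3}$. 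The main obstacle is bookkeeping: making sure the order of Taylor expansion is exactly matched to the moment order assumed for $a$, namely $n + 2 d_{p(\cdot)} + 2$, so that the exponent $M + 1 = n + 2 d_{p(\cdot)} + 3 = (2n + 2 d_{p(\cdot)} + 3) - n$ produces precisely the decay $(2n + 2 d_{p(\cdot)} + 3)$ required in $m_2)$ after dividing by $|x-z|^n$ — hence the peculiar-looking hypothesis on the atom's moments.

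Finally, I would collect the three estimates and fix the universal constant $C$ so that $C(Ta)$ satisfies $m_1)$--$m_3)$ simultaneously; note that $C$ depends only on $n$, $p_0$, the $L^{p_0}$-operator norm of $T$, and $\Omega$, but not on the atom $a$ or its cube $Q$, because every constant entering the above estimates is of that nature (the maximal operator / $L^{p_0}$-boundedness constants, the Taylor-remainder constants for $\Omega$, and the Hölder constant on $Q$ which is scale-invariant). This establishes that $T$ maps atoms of the stated order into molecules, which is exactly what is needed to combine with Theorems~\ref{w atomic decomp} and~\ref{molecular reconst} to deduce the $H^{p(\cdot)}_\omega \to H^{p(\cdot)}_\omega$ boundedness of $T$.
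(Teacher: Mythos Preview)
Your proposal is correct and follows essentially the same route as the paper's proof: the $L^{p_0}$-boundedness of $T$ gives $m_1)$; the Taylor expansion of $K(x-y)$ in $y$ around $z$ to order $n+2d_{p(\cdot)}+2$, combined with the moment condition on $a$ and the derivative bound $|\partial^{\alpha}K(y)|\lesssim |y|^{-n-|\alpha|}$, gives the pointwise decay $m_2)$; and the Fourier-side argument (Leibniz on $\partial^{\alpha}(m(\xi)\widehat a(\xi))$, using homogeneity of $m$ and the high-order vanishing of $\widehat a$ at the origin) gives $m_3)$. The paper makes the $m_3)$ step slightly more explicit by writing out the Leibniz expansion and invoking the limit $\lim_{\xi\to 0}|((-2\pi i x)^{\beta}a)^{\wedge}(\xi)|/|\xi|^{|\alpha|-|\beta|}=0$ (citing \cite[5.4, p.~128]{Stein}), but your sketch captures exactly this mechanism.
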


\begin{proof} It is well known that $\widehat{Tf}(\xi) = m(\xi) \widehat{f}(\xi)$, where the multiplier $m$ is homogeneous of degree $0$ and is indefinitely diferentiable on $\mathbb{R}^{n} \setminus \{0\}$ (see e.g. \cite{Elias}). Moreover, for 
$K(y) = \frac{\Omega(y/|y|)}{|y|^{n}}$ we have
\begin{equation}
|\partial^{\alpha}_{y} K(y) |\leq C |y|^{-n-|\alpha|}, \,\,\,\,  \textit{for all } \,\, y \neq 0 \,\, \textit{and all multi-index} \,\, \alpha. \label{k estimate}
\end{equation}

Let $p_0 > 1$ and $d_{p(\cdot)} = \lfloor n s_{\omega, p(\cdot)} - n \rfloor$. Given a $w - (p(\cdot), p_0, n+2d_{p(\cdot)}+2)$ atom 
$a(\cdot)$ with support in the cube $Q=Q(z, r)$ (with side length $\ell(Q)=r$), we have that
\begin{equation}
\| Ta \|_{L^{p_0}(2\sqrt{n}Q)} \leq C \| a \|_{L^{p_0}} \leq C \frac{|Q|^{1/p_0}}{\| \chi_Q \|_{L^{p(\cdot)}_{\omega}}}, \label{Ta 0}
\end{equation}
since $T$ is bounded on $L^{p_0}(\mathbb{R}^{n})$. In view of the moment condition of $a(\cdot)$ we obtain
\[
Ta(x)=  \int_{Q} K(x-y) a(y) dy = \int_{Q} [K(x-y) - q_{n+2d+2}(x,y)] a(y) dy, \,\, x \notin 2\sqrt{n} Q
\]
where $y \to q_{n+2d_{p(\cdot)}+2}(x,y)$ is the degree $n+2d_{p(\cdot)}+2$ Taylor polynomial of the function $y \rightarrow K(x-y)$ expanded around $z$. From the estimate in (\ref{k estimate}), and the standard estimate of the remainder term of the Taylor expansion, there exists $\xi$ between  $y$ and $z$ such that
\begin{equation} \label{Ta}
| Ta(x) | \leq C \| a \|_{1} \frac{\ell(Q)^{n+2d_{p(\cdot)}+3}}{|x - \xi|^{2n+2d_{p(\cdot)}+3}}
\leq C\frac{ \ell(Q)^{2n+2d_{p(\cdot)}+3}}{\| \chi_Q \|_{L^{p(\cdot)}_{\omega}}}  |x- z|^{-2n-2d_{p(\cdot)}-3}, \,\,\, x \notin 2\sqrt{n} Q,
\end{equation}
this inequality and a simple computation allow us to obtain
\begin{equation}
|Ta(x)| \leq C \| \chi_Q \|_{L^{p(\cdot)}_{\omega}}^{-1} \left(  1 + \frac{|x - z|}{\ell(Q)} \right)^{-2n - 2d_{p(\cdot)} - 3}, \,\,\, 
\textit{for all} \,\, x \notin 2\sqrt{n} Q.  \label{Ta 2}
\end{equation}
From the estimate in (\ref{Ta}) we obtain that the function $x \rightarrow x^{\alpha} Ta(x)$ belongs to $L^{1}(\mathbb{R}^{n})$ 
for each $|\alpha| \leq d_{p(\cdot)}$, so
\[
|((-2\pi i x)^{\alpha}Ta) \,\, \widehat{} \,\, (\xi) | = |\partial^{\alpha}_{\xi} (m(\xi) \widehat{a}(\xi))| = \left| \sum_{\beta \leq \alpha} c_{\alpha, \beta} \, (\partial^{\alpha - \beta}_{\xi} m)(\xi) \,  (\partial^{\beta}_{\xi} \widehat{a}) (\xi) \right|
\]
\[
= \left| \sum_{\beta \leq \alpha} c_{\alpha, \beta} \, (\partial^{\alpha - \beta}_{\xi} m)(\xi) \,  ((-2\pi i x)^{\beta} a) \,\, \widehat{} \,\, (\xi) \right|,
\]
from the homogeneity of the function $\partial^{\alpha - \beta}_{\xi} m$ we obtain that
\begin{equation}
|((-2\pi i x)^{\alpha}Ta) \,\, \widehat{} \,\, (\xi) |\leq C \sum_{\beta \leq \alpha}| c_{\alpha, \beta}|  \frac{\left| ((-2\pi i x)^{\beta} a) \,\, \widehat{} \,\, (\xi) \right|}{|\xi|^{|\alpha| -| \beta|}}, \,\,\,  \xi \neq 0. \label{limite}
\end{equation}
Since $\displaystyle{\lim_{\xi \rightarrow 0}}  \frac{\left| ((-2\pi i x)^{\beta} a) \,\, \widehat{} \,\, (\xi) \right|}{|\xi|^{|\alpha| -| \beta|}} =0$ for each $\beta \leq \alpha$ (see 5.4, pp. 128, in \cite{Stein}), taking the limit as $\xi \rightarrow 0$ at (\ref{limite}), we get 
\begin{equation}
\int_{\mathbb{R}^{n}} (-2\pi i x)^{\alpha} Ta(x) \, dx = ((-2\pi i x)^{\alpha}Ta) \,\, \widehat{} \,\, (0) =0, \,\,\, \textit{for all}
\,\,\, |\alpha| \leq d_{p(\cdot)}. \label{Ta 3} 
\end{equation}
From (\ref{Ta 0}), (\ref{Ta 2}) and (\ref{Ta 3}) it follows that there exists an universal constant $C>0$ such that $C(Ta)(\cdot)$ is 
a $w-(p(\cdot), p_0, d_{p(\cdot)})$ molecule if $a(\cdot)$ is a $w-(p(\cdot), p_0, n+2d_{p(\cdot)}+2)$ atom.
\end{proof}

\begin{theorem} \label{Boundedness T}
Let $p(\cdot) \in \mathcal{P}^{\log}(\mathbb{R}^{n})$, $\omega \in \mathcal{W}_{p(\cdot)}$ and let $T$ be the singular integral operator given by (\ref{sing op}). Then $T$ can be extended to a bounded operator on $H^{p(\cdot)}_{\omega}(\mathbb{R}^{n})$.
\end{theorem}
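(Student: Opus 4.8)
The plan is to deduce Theorem \ref{Boundedness T} from the atomic decomposition of Theorem \ref{w atomic decomp}, the mapping property of $T$ established in Proposition \ref{atoms into molec T}, and the molecular reconstruction Theorem \ref{molecular reconst}. First I would fix $p_0 > 1$ large enough; since $\omega \in \mathcal{W}_{p(\cdot)}$ we may pick $0 < \theta < 1$ with $\frac{1}{\theta} \in \mathbb{S}_{\omega, p(\cdot)}$ and, shrinking $\theta$ if necessary, arrange that $p_0 > \theta\bigl(\kappa^{1/\theta}_{\omega, p(\cdot)}\bigr)'$, so that both Theorem \ref{w atomic decomp} and Theorem \ref{molecular reconst} are available with these parameters. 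Writing $d := \lfloor n s_{\omega, p(\cdot)} - n \rfloor$, I would first work on the dense subspace $H^{p(\cdot)}_{\omega} \cap L^{p_0}$ (nonempty and dense by Proposition \ref{dense}, since $\mathcal{S}_0(\mathbb{R}^n) \subset H^{p(\cdot)}_{\omega} \cap L^{p_0}$): given such an $f$, apply Theorem \ref{w atomic decomp} with $N = n + 2d + 2$ (which satisfies $N \geq \lfloor n s_{\omega, p(\cdot)} - n\rfloor$) to write $f = \sum_j \lambda_j a_j$ in $L^{p_0}$, where each $a_j$ is an $\omega$-$(p(\cdot), p_0, n+2d+2)$ atom supported on $Q_j$ and $\mathcal{A}(\{\lambda_j\}, \{Q_j\}, p(\cdot), \omega, \theta) \lesssim \|f\|_{H^{p(\cdot)}_{\omega}}$.

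Next, since $T$ is bounded on $L^{p_0}$, the series $\sum_j \lambda_j T a_j$ converges to $Tf$ in $L^{p_0}$, hence also in $\mathcal{S}'(\mathbb{R}^n)$. By Proposition \ref{atoms into molec T}, $C^{-1} T a_j =: m_j$ is an $\omega$-$(p(\cdot), p_0, d)$ molecule centered at $Q_j$ for a universal constant $C$, so $Tf = C\sum_j \lambda_j m_j$ with the same cubes and (up to the constant $C$) the same coefficients. Applying Theorem \ref{molecular reconst} gives $Tf \in H^{p(\cdot)}_{\omega}$ with
\[
\|Tf\|_{H^{p(\cdot)}_{\omega}} \lesssim \mathcal{A}\bigl(\{\lambda_j\}, \{Q_j\}, p(\cdot), \omega, \theta\bigr) \lesssim \|f\|_{H^{p(\cdot)}_{\omega}},
\]
the implicit constants being independent of $f$. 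This establishes the a priori estimate $\|Tf\|_{H^{p(\cdot)}_{\omega}} \lesssim \|f\|_{H^{p(\cdot)}_{\omega}}$ on the dense subspace $H^{p(\cdot)}_{\omega} \cap L^{p_0}$.

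Finally, I would extend $T$ to all of $H^{p(\cdot)}_{\omega}$ by density: given arbitrary $f \in H^{p(\cdot)}_{\omega}$, choose $f_k \in H^{p(\cdot)}_{\omega} \cap L^{p_0}$ with $f_k \to f$ in $H^{p(\cdot)}_{\omega}$ (Proposition \ref{dense}); the a priori bound shows $\{T f_k\}$ is Cauchy in $H^{p(\cdot)}_{\omega}$, and one defines $\widetilde{T} f := \lim_k T f_k$, checking in the usual way that the limit is independent of the approximating sequence and that $\widetilde{T}$ agrees with $T$ on the dense subspace and satisfies $\|\widetilde{T} f\|_{H^{p(\cdot)}_{\omega}} \lesssim \|f\|_{H^{p(\cdot)}_{\omega}}$. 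I expect the main obstacle to be bookkeeping rather than conceptual: one must verify that the parameters $p_0$ and $\theta$ can be chosen simultaneously compatibly with the hypotheses of all three cited results — in particular that the atom order $n + 2d + 2$ demanded by Proposition \ref{atoms into molec T} is admissible in Theorem \ref{w atomic decomp} — and that convergence of the atomic series in $L^{p_0}$ (plus $L^{p_0}$-boundedness of $T$) legitimately transfers to $\mathcal{S}'$-convergence of $\sum_j \lambda_j T a_j$, as required to invoke Theorem \ref{molecular reconst}. The use of the $\mathcal{P}^{\log}$ hypothesis on $p(\cdot)$ enters only through Proposition \ref{dense}, which supplies the dense subspace needed for the final extension argument.
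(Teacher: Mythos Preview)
Your proposal is correct and follows essentially the same route as the paper's proof: fix compatible parameters $\theta$ and $p_0$, decompose $f$ atomically via Theorem \ref{w atomic decomp} with $N = n + 2d_{p(\cdot)} + 2$, use the $L^{p_0}$-boundedness of $T$ to pass to $Tf = \sum_j \lambda_j T a_j$ in $\mathcal{S}'$, invoke Proposition \ref{atoms into molec T} and Theorem \ref{molecular reconst}, and extend by density from Proposition \ref{dense}. The only cosmetic difference is that the paper fixes $\theta$ first and then chooses $p_0 > \max\{\theta(\kappa_{\omega, p(\cdot)}^{1/\theta})', 1\}$ (which avoids the question of whether shrinking $\theta$ preserves $1/\theta \in \mathbb{S}_{\omega, p(\cdot)}$), and works directly on $\mathcal{S}_0(\mathbb{R}^n)$ rather than the larger $H^{p(\cdot)}_{\omega} \cap L^{p_0}$.
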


\begin{proof}
Given $\omega \in \mathcal{W}_{p(\cdot)}$, by Definition \ref{pesos Wp}, there exists $0 < \theta < 1$ such that 
$\frac{1}{\theta} \in \mathbb{S}_{\omega, \, p(\cdot)}$. Let $p_0 > \max \left\{ \theta \left( \kappa_{\omega, \, p(\cdot)}^{1/\theta} \right)', 1 \right\}$. Given $f \in \mathcal{S}_{0}(\mathbb{R}^{n})$, by Theorem \ref{w atomic decomp}, there exist a sequence of real numbers 
$\{\lambda_j\}_{j=1}^{\infty}$, a sequence of cubes $\{ Q_j \}_{j=1}^{\infty}$, and 
$\omega - (p(\cdot), p_0, n + 2 d_{p(\cdot)} + 2)$ atoms $a_j$ supported 
on $Q_j$, such that $f = \sum_{j=1}^{\infty} \lambda_j a_j$ converges in $L^{p_0}(\mathbb{R}^{n})$ and
\begin{equation} \label{norm A}
\mathcal{A}(\{ \lambda_j \}_{j=1}^{\infty}, \{Q_j\}_{j=1}^{\infty}, p(\cdot), \omega, \theta) \lesssim 
\| f \|_{H^{p(\cdot)}_{\omega}} < \infty.
\end{equation}
Since $f = \sum_{j=1}^{\infty} \lambda_j a_j$ converges in $L^{p_0}(\mathbb{R}^{n})$ and $T$ is bounded on $L^{p_0}$, we have 
that 
\begin{equation} \label{convergencia de T}
Tf = \sum_{j=1}^{\infty} \lambda_j Ta_j \,\,\, \text{in} \,\, \mathcal{S}'(\mathbb{R}^{n}).
\end{equation} 
Now, by (\ref{convergencia de T}) and (\ref{norm A}) above and Proposition \ref{atoms into molec T}, we can apply 
Theorem \ref{molecular reconst} to obtain
\[
\| Tf \|_{H^{p(\cdot)}_{\omega}}  \lesssim 
\mathcal{A}(\{ \lambda_j \}_{j=1}^{\infty}, \{Q_j\}_{j=1}^{\infty}, p(\cdot), \omega, \theta) 
\lesssim  \| f \|_{H^{p(\cdot)}_{\omega}},
\]
for all $f \in \mathcal{S}_{0}(\mathbb{R}^{n})$. Finally, the theorem follows from the density of
$\mathcal{S}_{0}(\mathbb{R}^{n})$ in $H^{p(\cdot)}_{\omega}(\mathbb{R}^{n})$ (see Proposition \ref{dense}).
\end{proof}

In particular, if $p(\cdot) \in \mathcal{P}^{\log}$ and $\omega \in \mathcal{W}_{p(\cdot)}$, then Hilbert transform and Riesz transforms admit a continuous extension on $H^{p(\cdot)}_{\omega}(\mathbb{R})$ and on $H^{p(\cdot)}_{\omega}(\mathbb{R}^{n})$ respectively.

\section{Weighted variable estimates for Riesz potential}

Let $0 < \alpha < n$. The \textit{Riesz potential} of order $\alpha$ is the operator $I_{\alpha }$ defined, 
say on $\mathcal{S}(\mathbb{R}^{n})$, by
\begin{equation}
I_{\alpha }f(x)=\int_{\mathbb{R}^{n}} f(y) |x-y|^{\alpha - n}dy, \,\,\, x \in \mathbb{R}^{n}.
\label{Ia}
\end{equation}
In \cite{Rocha4}, the author proved that the operator $I_{\alpha}$ extends to a bounded operator 
$H^{p(\cdot)}_{\omega}(\mathbb{R}^{n}) \to L^{q(\cdot)}_{\omega}(\mathbb{R}^{n})$, for $\frac{1}{p(\cdot)} := \frac{1}{q(\cdot)} + \frac{\alpha}{n}$; under the following assumptions:

$1)$ $q(\cdot) \in \mathcal{P}^{\log}(\mathbb{R}^{n})$ and $\omega \in \mathcal{W}_{q(\cdot)}$;

$2)$ for every cube $Q \subset \mathbb{R}^{n}$, 
$\| \chi_Q \|_{L^{q(\cdot)}_{\omega}} \approx |Q|^{-\alpha/n} \| \chi_Q \|_{L^{p(\cdot)}_{\omega}}$. \\
We observe that if $q(\cdot) \in \mathcal{P}^{\log}(\mathbb{R}^{n})$ and $\omega \equiv 1$, then the condition $2)$ holds. This was proved in \cite{Rocha1}. In \cite{Rocha4}, the author gave non trivial examples of power weights satisfying $2)$. So, the condition $2)$ is an admissible hypothesis.

Next, assuming the conditions $1)$ and $2)$ above, we will prove that Riesz potential $I_{\alpha}$ extends to a bounded operator $H^{p(\cdot)}_{\omega}(\mathbb{R}^{n}) \to H^{q(\cdot)}_{\omega}(\mathbb{R}^{n})$. For them, we will first show that $I_{\alpha}$ maps atoms into molecules.

\begin{proposition} \label{atoms into molecules} 
Let $0 < \alpha < n$, $q(\cdot) : \mathbb{R}^{n} \to (0, \infty)$ be a measurable function with 
$0 < q_{-} \leq q_{+} < \infty$, and $\omega \in \mathcal{W}_{q(\cdot)}$. If $\frac{1}{p(\cdot)}:= \frac{1}{q(\cdot)} + \frac{\alpha}{n}$ 
and $\| \chi_Q \|_{L^{q(\cdot)}_{\omega}} \approx |Q|^{-\alpha/n} \| \chi_Q \|_{L^{p(\cdot)}_{\omega}}$ for every cube $Q$, then, for some
universal constant $C>0$, $C(I_{\alpha} a)(\cdot)$ is a $\omega - (q(\cdot), q_0, \lfloor n s_{\omega, q(\cdot)} - n \rfloor)$ molecule for each $\omega - (p(\cdot), p_0, 2 \lfloor n s_{\omega, q(\cdot)} - n \rfloor + \lfloor \alpha \rfloor + 3 + n)$ atom $a(\cdot)$, where 
$q_0 > \frac{n}{n-\alpha}$ and $\frac{1}{p_0} := \frac{1}{q_0} + \frac{\alpha}{n}$.
\end{proposition}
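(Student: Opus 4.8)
The plan is to verify, exactly as in the proof of Proposition \ref{atoms into molec T}, that $C(I_{\alpha}a)$ satisfies the three defining conditions $m_{1})$, $m_{2})$, $m_{3})$ of a $\omega-(q(\cdot),q_{0},d)$ molecule centered at $Q=Q(z,r)$, where $d:=\lfloor n s_{\omega,q(\cdot)}-n\rfloor$ and $a$ is an $\omega-(p(\cdot),p_{0},M)$ atom supported on $Q$ with $M:=2d+\lfloor\alpha\rfloor+3+n$. The hypothesis $\|\chi_{Q}\|_{L^{q(\cdot)}_{\omega}}\approx|Q|^{-\alpha/n}\|\chi_{Q}\|_{L^{p(\cdot)}_{\omega}}$ will be used repeatedly to pass between the two weighted norms.

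For $m_{1})$ I would invoke the Hardy--Littlewood--Sobolev inequality: the condition $q_{0}>\frac{n}{n-\alpha}$ forces $1<p_{0}<q_{0}<\infty$ with $\frac{1}{q_{0}}=\frac{1}{p_{0}}-\frac{\alpha}{n}$, so $I_{\alpha}$ maps $L^{p_{0}}$ boundedly into $L^{q_{0}}$, and hence
\[
\|I_{\alpha}a\|_{L^{q_{0}}(2\sqrt{n}Q)}\leq\|I_{\alpha}a\|_{L^{q_{0}}(\mathbb{R}^{n})}\lesssim\|a\|_{L^{p_{0}}}\leq\frac{|Q|^{1/p_{0}}}{\|\chi_{Q}\|_{L^{p(\cdot)}_{\omega}}}\approx\frac{|Q|^{\,1/p_{0}-\alpha/n}}{\|\chi_{Q}\|_{L^{q(\cdot)}_{\omega}}}=\frac{|Q|^{1/q_{0}}}{\|\chi_{Q}\|_{L^{q(\cdot)}_{\omega}}}.
\]
For $m_{2})$ I would repeat the argument of Proposition \ref{atoms into molec T} with the Calder\'on--Zygmund kernel replaced by $K(y)=|y|^{\alpha-n}$, which satisfies $|\partial_{y}^{\gamma}K(y)|\lesssim|y|^{\alpha-n-|\gamma|}$ for $y\neq0$. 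Using the vanishing moments of $a$ up to order $M$, one replaces $K(x-y)$ by $K(x-y)-q_{M}(x,y)$, the degree-$M$ Taylor polynomial of $y\mapsto K(x-y)$ about $z$; estimating the remainder and using $\|a\|_{L^{1}}\leq|Q|^{1/p_{0}'}\|a\|_{L^{p_{0}}}\leq|Q|\,\|\chi_{Q}\|_{L^{p(\cdot)}_{\omega}}^{-1}$ gives, for $x\notin2\sqrt{n}Q$ (so that $|x-z|>\sqrt{n}\,\ell(Q)$),
\[
|I_{\alpha}a(x)|\lesssim\frac{\ell(Q)^{M+1+n}}{\|\chi_{Q}\|_{L^{p(\cdot)}_{\omega}}}\,|x-z|^{\alpha-n-M-1}\approx\frac{1}{\|\chi_{Q}\|_{L^{q(\cdot)}_{\omega}}}\left(\frac{\ell(Q)}{|x-z|}\right)^{M+1+n-\alpha}.
\]
The point of choosing $M=2d+\lfloor\alpha\rfloor+3+n$ is that $M+1+n-\alpha>2n+2d+3$ (since $\lfloor\alpha\rfloor+3>\alpha+2$), so that on the complement of $2\sqrt{n}Q$ the last quantity is dominated by $\|\chi_{Q}\|_{L^{q(\cdot)}_{\omega}}^{-1}\bigl(1+|x-z|/\ell(Q)\bigr)^{-2n-2d-3}$, which is precisely $m_{2})$.

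For $m_{3})$ I would use the Fourier transform, as in Proposition \ref{atoms into molec T}. The decay just obtained, together with $I_{\alpha}a\in L^{q_{0}}$, shows $x\mapsto x^{\beta}I_{\alpha}a(x)\in L^{1}(\mathbb{R}^{n})$ for every $|\beta|\leq d$ (here one needs $|\beta|<M+1-\alpha$, which holds), so $\widehat{x^{\beta}I_{\alpha}a}$ is continuous. Writing $\widehat{I_{\alpha}a}(\xi)=c_{n,\alpha}|\xi|^{-\alpha}\widehat{a}(\xi)$ and applying Leibniz's rule,
\[
\partial_{\xi}^{\beta}\bigl(|\xi|^{-\alpha}\widehat{a}(\xi)\bigr)=\sum_{\gamma\leq\beta}\binom{\beta}{\gamma}\bigl(\partial_{\xi}^{\beta-\gamma}|\xi|^{-\alpha}\bigr)\,\bigl((-2\pi ix)^{\gamma}a\bigr)\widehat{\phantom{a}}(\xi);
\]
since $x^{\gamma}a$ inherits vanishing moments up to order $M-|\gamma|$ from $a$, one has $\bigl((-2\pi ix)^{\gamma}a\bigr)\widehat{\phantom{a}}(\xi)=O(|\xi|^{M-|\gamma|+1})$ near the origin, while the homogeneity of $|\xi|^{-\alpha}$ gives $\partial_{\xi}^{\beta-\gamma}|\xi|^{-\alpha}=O(|\xi|^{-\alpha-|\beta|+|\gamma|})$; the product is $O(|\xi|^{M+1-\alpha-|\beta|})\to0$ as $\xi\to0$ because $|\beta|\leq d<M+1-\alpha$. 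Hence $\widehat{x^{\beta}I_{\alpha}a}(0)=0$, i.e. $\int x^{\beta}I_{\alpha}a(x)\,dx=0$ for all $|\beta|\leq d$, which is $m_{3})$; combining the three estimates yields the claim.

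I expect the main obstacle to be $m_{3})$: one must track carefully how the vanishing moments of the atom are consumed — first by the pointwise estimate in $m_{2})$ in order to guarantee $x^{\beta}I_{\alpha}a\in L^{1}$, and then by the Fourier argument in order to defeat the singularity of $|\xi|^{-\alpha}$ at the origin — and it is exactly this bookkeeping that forces the moment order $2d+\lfloor\alpha\rfloor+3+n$ imposed on $a$. The estimates for $m_{1})$ and $m_{2})$ are routine once the Hardy--Littlewood--Sobolev inequality and the kernel bounds for $|y|^{\alpha-n}$ are available.
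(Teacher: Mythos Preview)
Your proof is correct and follows essentially the same route as the paper's: $m_{1})$ via Hardy--Littlewood--Sobolev plus the hypothesis $\|\chi_{Q}\|_{L^{q(\cdot)}_{\omega}}\approx|Q|^{-\alpha/n}\|\chi_{Q}\|_{L^{p(\cdot)}_{\omega}}$, $m_{2})$ via Taylor expansion of $|y|^{\alpha-n}$ combined with the vanishing moments and $\|a\|_{L^{1}}\leq|Q|\,\|\chi_{Q}\|_{L^{p(\cdot)}_{\omega}}^{-1}$, and $m_{3})$ via the Fourier-side argument. The paper compresses $m_{2})$ by citing an estimate from \cite{Rocha4} and dispatches $m_{3})$ by pointing to Taibleson--Weiss \cite{Taibleson}, whereas you spell out both computations (including the careful bookkeeping that $|\beta|\leq d<M+1-\alpha$), but the substance is identical; the only item the paper adds that you omit is the remark that $\omega\in\mathcal{W}_{q(\cdot)}\subset\mathcal{W}_{p(\cdot)}$ and $N\geq d_{p(\cdot)}$, which merely confirms the atom notion is well-posed and is not needed for your argument.
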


\begin{proof} By \cite[Proposition 3.1]{Rocha4}, we have that $\mathcal{W}_{q(\cdot)} \subset \mathcal{W}_{p(\cdot)}$ and $s_{\omega, \, p(\cdot)} \leq s_{\omega, \, q(\cdot)} + \frac{\alpha}{n}$. Let $d_{q(\cdot)} := \lfloor n s_{\omega, q(\cdot)} - n \rfloor$ and $N = 2d_{q(\cdot)} + \lfloor \alpha \rfloor + 3 + n$, we observe that $N \geq d_{p(\cdot)}$. 
Fix $q_0 > \frac{n}{n-\alpha}$ and we put $\frac{1}{p_0} := \frac{1}{q_0} + \frac{\alpha}{n}$. 
Given a $\omega - (p(\cdot), p_0, N)$ atom $a(\cdot)$ supported on a cube $Q=Q(z,r)$, we will show that there exists an universal 
constant $C>0$ such that $C I_{\alpha}a(\cdot) $ is a $\omega - ( q(.), q_{0}, d_{q(\cdot)} )$ molecule centered at $Q$.
Indeed,

\vspace{0.2cm}
$m_1)$ by Sobolev's theorem, the condition $a_1)$ of the atom $a(\cdot)$, and since 
$\| \chi_Q \|_{L^{q(\cdot)}_{\omega}} \approx |Q|^{-\alpha/n} \| \chi_Q \|_{L^{p(\cdot)}_{\omega}}$ for every cube $Q$, we have
\[
\| I_{\alpha}a \|_{L^{q_{0}}(2\sqrt{n}Q)} \lesssim \| a \|_{L^{p_{0}}} \leq \frac{| Q |^{\frac{1}{p_{0}}}}
{\| \chi _{Q} \|_{L^{p(\cdot)}_{\omega}}} \lesssim \frac{| Q |^{\frac{1}{q_{0}}}}{\| \chi_{Q} \|_{L^{q(\cdot)}_{\omega}}};
\]

$m_2)$ by doing use of the conditions $a_3)$ and $a_2)$ of the atom $a(\cdot)$, as in the estimate (24) obtained 
in \cite[Theorem 5.1]{Rocha4}, we get, for $x$ outside $2\sqrt{n}Q,$ that
\begin{eqnarray*}
|(I_{\alpha}a)(x)| & \lesssim & \frac{\ell(Q)^{N + 1}}{|x-z|^{n-\alpha + N + 1}} \|a\|_{L^{1}} \\
& \lesssim & \frac{1}{\| \chi _{Q} \|_{L^{q(\cdot)}_{\omega}}} \left( \frac{\ell(Q)}{|x-z|}\right)^{n - \alpha + N + 1} \\
& \lesssim & \frac{1}{\| \chi _{Q} \|_{L^{q(\cdot)}_{\omega}}}\left( \frac{\ell(Q)}{\ell(Q)+ |x-z|}\right)^{2n+2d_{q(\cdot)}+3} \\
& = & \frac{1}{ \| \chi _{Q} \|_{L^{q(\cdot)}_{\omega}}} \left( 1+\frac{|x-z|}{\ell(Q)}\right)^{-2n-2d_{q(\cdot)}-3},
\end{eqnarray*}
where the third inequality follows since $x$ outside $2\sqrt{n}Q$ implies $\ell(Q)+ |x-z| < 2 |x-z|$;

$m_3)$ the moment condition was proved by Taibleson and Weiss in \cite{Taibleson}.
\\
Thus $I_{\alpha}a$ satisfies Definition {\ref{def molecule}} with an universal implicit constant. This completes the proof.
\end{proof}

\begin{theorem} \label{Boundedness Ialpha}
Let $0 < \alpha < n$, $q(\cdot) \in \mathcal{P}^{\log}(\mathbb{R}^{n})$ with $0 < q_{-} \leq q_{+} < \infty$ and 
$\omega \in \mathcal{W}_{q(\cdot)}$. If $\frac{1}{p(\cdot)} := \frac{1}{q(\cdot)} + \frac{\alpha}{n}$ and 
$\| \chi_Q \|_{L^{q(\cdot)}_{\omega}} \approx |Q|^{-\alpha/n} \| \chi_Q \|_{L^{p(\cdot)}_{\omega}}$ for every cube $Q$, then 
the Riesz potential $I_{\alpha}$ given by (\ref{Ia}) can be extended to a bounded operator 
$H^{p(\cdot)}_{\omega}(\mathbb{R}^{n}) \to H^{q(\cdot)}_{\omega}(\mathbb{R}^{n})$.
\end{theorem}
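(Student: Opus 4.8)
The plan is to mimic the proof of Theorem \ref{Boundedness T}, replacing the singular integral $T$ by the Riesz potential $I_\alpha$ and the target space $H^{p(\cdot)}_\omega$ by $H^{q(\cdot)}_\omega$. First I would observe, via \cite[Proposition 3.1]{Rocha4}, that $\omega \in \mathcal{W}_{q(\cdot)}$ implies $\omega \in \mathcal{W}_{p(\cdot)}$, so the atomic decomposition of Theorem \ref{w atomic decomp} is available in $H^{p(\cdot)}_\omega$. Fix $\theta \in (0,1)$ with $\tfrac{1}{\theta} \in \mathbb{S}_{\omega,\,q(\cdot)}$ (possible by Definition \ref{pesos Wp}(ii) applied to $q(\cdot)$), and choose $q_0 > \max\{ \tfrac{n}{n-\alpha},\, \theta(\kappa^{1/\theta}_{\omega,\,q(\cdot)})' \}$ together with $p_0$ defined by $\tfrac{1}{p_0} := \tfrac{1}{q_0} + \tfrac{\alpha}{n}$, so that $p_0 > 1$. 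Then I would take $f \in \mathcal{S}_0(\mathbb{R}^n)$ and apply Theorem \ref{w atomic decomp} with the moment order $N = 2\lfloor n s_{\omega,q(\cdot)} - n\rfloor + \lfloor \alpha \rfloor + 3 + n$ (legitimate since this $N$ exceeds $\lfloor n s_{\omega, p(\cdot)} - n\rfloor$, again by $s_{\omega,p(\cdot)} \le s_{\omega,q(\cdot)} + \tfrac{\alpha}{n}$), obtaining $f = \sum_j \lambda_j a_j$ converging in $L^{p_0}$ with $\mathcal{A}(\{\lambda_j\},\{Q_j\}, p(\cdot),\omega,\theta) \lesssim \|f\|_{H^{p(\cdot)}_\omega}$.

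Next I would pass the decomposition through $I_\alpha$. Since the partial sums converge to $f$ in $L^{p_0}$ and $I_\alpha : L^{p_0} \to L^{q_0}$ is bounded (Sobolev embedding, valid because $q_0 > \tfrac{n}{n-\alpha}$ guarantees $\tfrac{1}{p_0} - \tfrac{\alpha}{n} = \tfrac{1}{q_0} \in (0,1)$), we get $I_\alpha f = \sum_j \lambda_j I_\alpha a_j$ in $L^{q_0}(\mathbb{R}^n)$, hence in $\mathcal{S}'(\mathbb{R}^n)$. By Proposition \ref{atoms into molecules}, each $C\, I_\alpha a_j$ is an $\omega$-$(q(\cdot), q_0, \lfloor n s_{\omega,q(\cdot)} - n\rfloor)$ molecule centered at $Q_j$. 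Now I would invoke Theorem \ref{molecular reconst} with the exponent $q(\cdot)$ in place of $p(\cdot)$: the hypotheses $\tfrac{1}{\theta} \in \mathbb{S}_{\omega,q(\cdot)}$ and $q_0 > \theta(\kappa^{1/\theta}_{\omega,q(\cdot)})'$ were arranged precisely for this, and the molecular series converges in $\mathcal{S}'$. This yields $I_\alpha f \in H^{q(\cdot)}_\omega$ with
\[
\|I_\alpha f\|_{H^{q(\cdot)}_\omega} \lesssim \mathcal{A}(\{\lambda_j\},\{Q_j\}, q(\cdot),\omega,\theta).
\]

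The one genuinely new ingredient, and the place where hypothesis $2)$ (i.e.\ $\|\chi_Q\|_{L^{q(\cdot)}_\omega} \approx |Q|^{-\alpha/n}\|\chi_Q\|_{L^{p(\cdot)}_\omega}$) enters, is converting the right-hand side from the $q(\cdot)$-quantity to the $p(\cdot)$-quantity: by Proposition \ref{Aq menor Ap} we have
\[
\mathcal{A}(\{\lambda_j\},\{Q_j\}, q(\cdot),\omega,\theta) \lesssim \mathcal{A}(\{\lambda_j\},\{Q_j\}, p(\cdot),\omega,\theta) \lesssim \|f\|_{H^{p(\cdot)}_\omega}.
\]
Chaining these estimates gives $\|I_\alpha f\|_{H^{q(\cdot)}_\omega} \lesssim \|f\|_{H^{p(\cdot)}_\omega}$ for every $f \in \mathcal{S}_0(\mathbb{R}^n)$, and since $\mathcal{S}_0(\mathbb{R}^n)$ is dense in $H^{p(\cdot)}_\omega(\mathbb{R}^n)$ by Proposition \ref{dense} (applicable because $p(\cdot) \in \mathcal{P}^{\log}$; here one should note that $q(\cdot) \in \mathcal{P}^{\log}$ and the relation $\tfrac{1}{p(\cdot)} = \tfrac{1}{q(\cdot)} + \tfrac{\alpha}{n}$ force $p(\cdot) \in \mathcal{P}^{\log}$ as well), $I_\alpha$ extends by continuity to a bounded operator $H^{p(\cdot)}_\omega(\mathbb{R}^n) \to H^{q(\cdot)}_\omega(\mathbb{R}^n)$. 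I expect the main obstacle to be purely bookkeeping: tracking that the single choice of $\theta$ is simultaneously compatible with the molecular reconstruction theorem for $q(\cdot)$ and with the atomic decomposition for $p(\cdot)$, and verifying the moment-order inequality $N \ge d_{p(\cdot)}$ — both of which are handled by the inclusion $\mathcal{W}_{q(\cdot)} \subset \mathcal{W}_{p(\cdot)}$ and the index inequality from \cite[Proposition 3.1]{Rocha4}.
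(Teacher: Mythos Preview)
Your proposal is correct and follows essentially the same route as the paper's own proof: the same choice of $\theta$ and $q_0$, atomic decomposition via Theorem \ref{w atomic decomp} at the specified moment order, passage through $I_\alpha$ via Sobolev to obtain $\mathcal{S}'$-convergence, Proposition \ref{atoms into molecules} to produce molecules, Theorem \ref{molecular reconst} applied with $q(\cdot)$, then Proposition \ref{Aq menor Ap} and density via Proposition \ref{dense}. If anything, your write-up is slightly more explicit than the paper's (e.g.\ spelling out why $N \ge d_{p(\cdot)}$ and why $p(\cdot) \in \mathcal{P}^{\log}$).
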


\begin{proof} 
Let $\omega \in \mathcal{W}_{q(\cdot)}$, by Definition \ref{pesos Wp}, there exists $0 < \theta < 1$ such that 
$\frac{1}{\theta} \in \mathbb{S}_{\omega, \, q(\cdot)}$. 
Now, we take $q_0 > \max \left\{ \theta \left( \kappa_{\omega, \, q(\cdot)}^{1/\theta} \right)', \frac{n}{n-\alpha} \right\}$, 
and define  $\frac{1}{p_0} := \frac{1}{q_0} + \frac{\alpha}{n}$. By \cite[Proposition 3.1]{Rocha4}, we have that $\mathcal{W}_{q(\cdot)} \subset \mathcal{W}_{p(\cdot)}$ and $s_{\omega, \, p(\cdot)} \leq s_{\omega, \, q(\cdot)} + \frac{\alpha}{n}$. 
So, given $f \in S_{0}(\mathbb{R}^{n})$, by Theorem \ref{w atomic decomp} there exist a sequence of real numbers 
$\{\lambda_j\}_{j=1}^{\infty}$, a sequence of cubes $\{ Q_j \}_{j=1}^{\infty}$, and 
$\omega - (p(\cdot), p_0, 2 \lfloor n s_{\omega, \, q(\cdot)} - n \rfloor + \lfloor \alpha \rfloor + 3 + n)$ atoms $a_j$ supported 
on $Q_j$, satisfying
\begin{equation} \label{norm Hpw2}
\mathcal{A}(\{ \lambda_j \}_{j=1}^{\infty}, \{Q_j\}_{j=1}^{\infty}, p(\cdot), \omega, \theta) \lesssim 
\| f \|_{H^{p(\cdot)}_{\omega}} < \infty,
\end{equation}
and $f = \sum_{j=1}^{\infty} \lambda_j a_j$ converges in $L^{p_0}(\mathbb{R}^{n})$. 
Then, by Sobolev's Theorem, $I_{\alpha}f = \sum_{j=1}^{\infty} \lambda_{j} I_\alpha(a_{j})$ in 
$L^{\frac{n p_{0}}{n- \alpha p_0}}(\mathbb{R}^{n})$ thus 
\begin{equation} \label{converg Riesz pot}
I_{\alpha}f = \sum_{j=1}^{\infty} \lambda_{j} I_\alpha(a_{j}) \,\,\, \text{in} \,\, \mathcal{S}'(\mathbb{R}^{n}).
\end{equation}
Now, by (\ref{converg Riesz pot}), (\ref{norm Hpw2}) and Proposition \ref{Aq menor Ap}, and
Proposition \ref{atoms into molecules}, we can apply Theorem \ref{molecular reconst} to obtain
\begin{eqnarray*}
\| I_{\alpha}f \|_{H^{q(\cdot)}_{\omega}} & \lesssim &
\mathcal{A}(\{ \lambda_j \}_{j=1}^{\infty}, \{Q_j\}_{j=1}^{\infty}, q(\cdot), \omega, \theta) \\
& \lesssim & \mathcal{A}(\{ \lambda_j \}_{j=1}^{\infty}, \{Q_j\}_{j=1}^{\infty}, p(\cdot), \omega, \theta) \\
& \lesssim & \| f \|_{H^{p(\cdot)}_{\omega}},
\end{eqnarray*}
for all $f \in \mathcal{S}_{0}(\mathbb{R}^{n})$. Finally, since $p(\cdot) \in \mathcal{P}^{\log}(\mathbb{R}^{n})$, 
$0 < p_{-} \leq p_{+} < \infty$ and $\omega \in \mathcal{W}_{p(\cdot)}$, the theorem follows from the density of
$\mathcal{S}_{0}(\mathbb{R}^{n})$ in $H^{p(\cdot)}_{\omega}(\mathbb{R}^{n})$ (see Proposition \ref{dense}).
\end{proof}

\bigskip
\address{
Departamento de Matem\'atica \\
Universidad Nacional del Sur \\
Bah\'{\i}a Blanca, 8000 Buenos Aires \\
Argentina}
{pablo.rocha@uns.edu.ar}


\begin{thebibliography}{99}

\bibitem{Coifman} {\small R. COIFMAN}, A real characterization of $H^{p}$, Studia Math. 51 (1974), 269-274.

\bibitem{Coifman2} {\small R. COIFMAN and G. WEISS}, Extensions of Hardy spaces and their use in analysis, Bull. Amer. Math. Soc. 83 (1977), 569-645.

\bibitem{Uribe3} {\small D. CRUZ-URIBE and D. WANG}, Variable Hardy Spaces, Indiana Univ. Math. J. 63 (2) (2014), 447-493.

\bibitem{Fefferman} {\small C. FEFFERMAN and E. STEIN}, $H^{p}$ spaces of several variables, Acta Math. 129 (1972), 137-193.
 
\bibitem{Ho1} {\small K.-P. HO}, Atomic decompositions of weighted Hardy spaces with variable exponents, Tohoku Math. J. 69 (2) (2017), 
no. 3, 383-413.

\bibitem{Ho2} {\small K.-P. HO}, Sublinear operators on weighted Hardy spaces with variable exponents, Forum Math. 31 (3) (2019), 607-617.

\bibitem{Latter} {\small R. H. LATTER}, A characterization of $H^{p}(\mathbb{R}^{n})$ in terms of atoms, Studia Math. 62 (1978), 93-101.

\bibitem{Nakai} {\small E. NAKAI and Y. SAWANO}, Hardy spaces with variable exponents and generalized Campanato spaces, J. Funct. Anal. 262 
(2012), 3665-3748.

\bibitem{Rocha2} {\small P. ROCHA}, A note on Hardy spaces and bounded linear operators, Georgian Math. J. 25 (1) (2018), 73-76.

\bibitem{Rocha3} {\small P. ROCHA}, Boundedness of generalized Riesz potentials on the variable Hardy spaces, J. Aust. Math. Soc. 104 
(2018), 255-273.

\bibitem{Rocha4} {\small P. ROCHA}, Inequalities for weighted spaces with variable exponents, (2022), Preprint: arXiv:2211.12218

\bibitem{Rocha1} {\small P. ROCHA and M. URCIUOLO}, Fractional type integral operators on variable Hardy spaces, Acta Math. Hung. 143 
(2) (2014), 502-514.

\bibitem{Elias} {\small E. STEIN}, Singular Integrals and Differentiability Properties of Functions, Princeton Univ. Press, 
Princeton, NJ, 1970.

\bibitem{Stein} {\small E. STEIN}, Harmonic Analysis: Real-Variable Methods, Orthogonality, and Oscillatory Integrals, Princeton
University Press, 1993.

\bibitem{Taibleson} {\small M. H. TAIBLESON and G. WEISS}, The molecular characterization of certain Hardy spaces, Ast\'{e}risque 77 (1980), 67-149.

\end{thebibliography}
\end{document}